\def\dsum{\displaystyle\sum}
\newtheorem{theorem}{Theorem}
\theoremstyle{plain}
\newtheorem{conjecture}{Conjecture}
\newtheorem{lemma}{Lemma}
\newtheorem{problem}{Problem}
\newtheorem{remark}{Remark}
\numberwithin{equation}{section}
\begin{document}
	\setcounter{page}{1}
	
\title[Properties of beta function and Ramanujan $R$-function]{Some new
	properties of the beta function and Ramanujan $R$-function}
\author{Zhen-Hang Yang}
\address{Zhen-Hang Yang, State Grid Zhejiang Electric Power Company Research
	Institute, Hangzhou, Zhejiang, China, 310014}
\email{yzhkm@163.com}
\urladdr{https://orcid.org/0000-0002-2719-4728}
\author{Miao-Kun Wang$^{**}$}
\address{Miao-Kun Wang, Department of mathematics, Huzhou University,
	Huzhou, Zhejiang, China, 313000}
\email{wmk000@126.com}
\urladdr{https://orcid.org/0000-0002-0895-7128}
\author{Tie-Hong Zhao}
\address{Tie-Hong Zhao, School of mathematics, Hangzhou Normal University,
	Hangzhou, Zhejiang, China, 311121}
\email{tiehong.zhao@hznu.edu.cn}
\urladdr{https://orcid.org/0000-0002-6394-1094}
\date{June 23, 2024}
\subjclass[2000]{33B15, 33C05, 11M06, 30B10, 26A48}
\keywords{Beta function, Ramanujan function, power series, hypergeometric
	series, complete monotonicity, monotonicity}
\thanks{This research was supported by the National Natural Science
	Foundation of China (11971142) and the Natural Science Foundation of
	Zhejiang Province (LY24A010011).}
\thanks{$^{**}$Corresponding author}

	\begin{abstract}
		In this paper, the power series and hypergeometric series representations of
		the beta and Ramanujan functions 
		\begin{equation*}
			\mathcal{B}\left( x\right) =\frac{\Gamma \left( x\right) ^{2}}{\Gamma \left(
				2x\right) }\text{ \ and \ }\mathcal{R}\left( x\right) =-2\psi \left(
			x\right) -2\gamma
		\end{equation*}%
		are presented, which yield higher order monotonicity results related to $%
		\mathcal{B}(x)$ and $\mathcal{R}(x)$; the decreasing property of the
		functions $\mathcal{R}\left( x\right) /\mathcal{B}\left( x\right) $ and $[%
		\mathcal{B}(x) -\mathcal{R}(x)] /x^{2}$ on $\left( 0,\infty \right) $ are
		proved. Moreover, a conjecture put forward by Qiu et al. in \cite%
		{Qiu-JMAA-446-2017} is proved to be true. As applications, several
		inequalities and identities are deduced. These results obtained in this
		paper may be helpful for the study of certain special functions. Finally, an
		interesting infinite series similar to Riemann zeta functions is observed
		initially.
	\end{abstract}
	
	\maketitle
	
	\section{Introduction}
	
	For $a,b,c\in \mathbb{R}$ with $-c\notin \mathbb{N}\backslash \left\{
	0\right\} $, the hypergeometric function is defined on $\left( -1,1\right) $
	by%
	\begin{equation}
		F\left( a,b;c;x\right) =\sum_{n=0}^{\infty }\frac{\left( a\right) _{n}\left(
			b\right) _{n}}{\left( c\right) _{n}}\frac{x^{n}}{n!},  \label{2F1-1}
	\end{equation}%
	where $\left( a\right) _{n}=\Gamma \left( n+a\right) /\Gamma \left( a\right) 
	$ and $\Gamma \left( x\right) $ is the classical gamma function. In
	particular, $F\left( a,b;a+b;x\right) $ is called zero-balanced
	hypergeometric function, which satisfies the asymptotic formula%
	\begin{equation}
		F\left( a,b;a+b;x\right) =\frac{R\left( a,b\right) -\ln t}{B\left(
			a,b\right) }+O\left( t\ln t\right) \text{ \ as }t=1-x\rightarrow 0\text{,}
		\label{F-af-R}
	\end{equation}%
	where%
	\begin{equation*}
		B\left( a,b\right) =\frac{\Gamma \left( a\right) \Gamma \left( b\right) }{%
			\Gamma \left( a+b\right) }\text{ \ and \ }R\left( a,b\right) =-\psi \left(
		a\right) -\psi \left( b\right)-2\gamma
	\end{equation*}%
	are the beta function and Ramanujan $R$-function (or Ramanujan constant),
	respectively, while $\psi \left( x\right) =d[\ln \Gamma \left( x\right) ]/dx$
	is the psi function and $\gamma =-\psi (1)$ is the Euler constant. The
	asymptotic formula \eqref{F-af-R} was due to Ramanujan \cite%
	{Berndt-RN-P2-SV-1989}.
	
	In the study for special functions in the geometric function theory and
	Ramanujan's modular equation, it is often crucial to the treatments for the
	functions $\left( a,b\right) \mapsto B(a,b)$, $\left( a,b\right) \mapsto
	R\left( a,b\right) $ and their combinations. For this, ones have to spend a
	lot of time dealing with such problems. For example, in order to prove the
	double inequality 
	\begin{equation}
		1+\alpha (1-r^{2})<\frac{K_{a}\left( r\right) }{\sin \left( \pi a\right) \ln
			\left( e^{R\left( a\right) /2}/\sqrt{1-r^{2}}\right) }<1+\beta (1-r^{2})
		\label{I-WCQ}
	\end{equation}%
	holds for all $a\in (0,1/2]$ and $r\in \left( 0,1\right) $ if and only if $%
	\alpha \leq \alpha _{0}=\pi /\left[ R\left( a\right) \sin \left( \pi
	a\right) \right] -1$\emph{\ and }$\beta \geq \beta _{0}=a\left( 1-a\right) $
	where 
	\begin{equation*}
		K_{a}\left( r\right) =\frac{\pi }{2}F\left( a,1-a;1,r^{2}\right) \text{\ and
			\ }R\left( a\right) =R(a,1-a),
	\end{equation*}%
	five properties of $R\left( a\right) $ should be proved at first (see \cite[%
	Section 2]{Wang-JMAA-429-2015}). For another example, it is precisely
	because the inequality established in \cite[Eq. (2.13)]{Wang-JMAA-429-2015},
	the increasing property of the function%
	\begin{equation}
		r\mapsto Y\left( r\right) =\frac{K_{a}\left( r\right) }{(1-r^{2})\sin \left(
			\pi a\right) \ln \left( e^{R\left( a\right) /2}/\sqrt{1-r^{2}}\right) }-%
		\frac{1}{1-r^{2}}  \label{Y(r)}
	\end{equation}%
	on $\left( 0,1\right) $ was proved (see \cite{Yang-MIA-20-2017-3}). More of
	such examples can be seen in \cite{Evans-SIAM0JMA-15-1984}, \cite%
	{Ponnusamy-M-44-1997}, \cite{Qiu-NMJ-154-1999}, \cite[Lemma 2.14]%
	{Qiu-FM-12-2000}, \cite{Heikkala-JMAA-338-2008}, \cite{Heikkala-CMFT-9-2009}%
	, \cite[Lemma 3.5]{Wang-MIA-22-2019}, \cite{Yang-RJ-48-2019}, \cite%
	{Wang-AMS-39B-2019}. Fortunately, several properties of $R\left(
	a,1-a\right) $, $B\left( a,1-a\right) $ and their combinations have been
	discovered, which can be summarized as the following three aspects:
	\begin{itemize}[leftmargin=2.5em]
		\item[(i)] the monotonicity and bounds for $R\left( x\right) =R\left( x,1-x\right) $
		can be found in \cite[Lemma 2.1]{Qiu-AMS-43-2000}, \cite{Qiu-JGDU-27-2007}, 
		\cite[Theorem 1]{Zhou-JZSTU-27-2010}, \cite[Lemma 2.1, Theorem 2.3,
		Corollaries 2.4 and 2.5]{Wang-JMAA-429-2015}, \cite{Chu-JIA-2016-196}, \cite[%
		Lemma 4]{Yang-RJ-48-2019};
		\item [(ii)] the monotonicity and convexity results for the combinations of $%
		R\left(x\right) $ and $B\left(x\right)=B\left( x,1-x\right) $ can be seen in 
		\cite[Theorem 2]{Zhou-JZSTU-27-2010}, \cite[Theorems 1.1--1.3]%
		{Qiu-CA-51-2020};
		\item [(iii)] the series expansion of $R\left(x\right)$ was presented in \cite[%
		Theorem 2.2]{Wang-JMAA-429-2015}, the power series expansions of $R\left(
		x\right) -B\left( x\right) $, $\left[ R\left( x\right) -B\left( x\right) %
		\right] /x$ and $\left[ R\left( x\right) -B\left( x\right) \right]/\left[
		x\left( 1-x\right) \right] $ at $x=0$, $1/2$ and higher monotonicity of them
		were established in \cite[Theorems 1.1 and 1.2]{Qiu-JMAA-446-2017}, while
		the power series expansions of $[1+x(1-x)]R(x)-B(x)$ at $x=0$, $1/2$ and its
		complete monotonicity were found in \cite[Theorem 1.1]{Qiu-CA-51-2020}.
	\end{itemize}
	
In the end of \cite{Qiu-CA-51-2020}, Qiu, Ma and Huang  made a conjecture below.
	
	\begin{conjecture}[{\protect\cite[Conjecture 5.3]{Qiu-CA-51-2020}}]
		\label{C-Qiu} The function%
		\begin{equation*}
			F\left( x\right) =R\left( x\right) -\frac{B\left( x\right) }{1+x\left(
				1-x\right) }
		\end{equation*}%
		is completely monotonic on $\left( 0,1/2\right) $.
	\end{conjecture}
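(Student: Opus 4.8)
The plan is to exploit the algebraic identity
\[
F(x)=R(x)-\frac{B(x)}{1+x(1-x)}=\frac{[1+x(1-x)]R(x)-B(x)}{1+x(1-x)}=\frac{G(x)}{1+x(1-x)},
\]
where $G(x)=[1+x(1-x)]R(x)-B(x)$ is already known to be completely monotonic on $(0,1/2)$ by Theorem~1.1 of \cite{Qiu-CA-51-2020}. Since a product of two functions that are completely monotonic on the same interval is again completely monotonic there --- this follows at once from the Leibniz rule, because
\[
(-1)^{n}(fg)^{(n)}=\sum_{k=0}^{n}\binom{n}{k}\bigl[(-1)^{k}f^{(k)}\bigr]\bigl[(-1)^{n-k}g^{(n-k)}\bigr]\ge 0
\]
whenever every factor on the right is nonnegative --- it suffices to prove that the single factor $g(x)=1/[1+x(1-x)]$ is completely monotonic on $(0,1/2)$. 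This is the heart of the matter and, I expect, the main obstacle: the claim is genuinely non-obvious, since writing $v=x-1/2$ and $a=\sqrt5/2$ the partial-fraction splitting $g=\frac{1}{2a}\bigl(\frac{1}{a-v}+\frac{1}{a+v}\bigr)$ expresses $g$ as the sum of a completely monotonic term $1/(a+v)$ and an \emph{increasing} term $1/(a-v)$, so the desired positivity can only come from a cancellation of the odd part.

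To expose that cancellation I would center everything at the symmetry point $x=1/2$. Putting $w=1/2-x\in(0,1/2)$ and $\Psi(w)=g(1/2-w)$, one checks $\Psi^{(n)}(w)=(-1)^{n}g^{(n)}(1/2-w)$, so that $g$ is completely monotonic on $(0,1/2)$ if and only if $\Psi$ is \emph{absolutely monotonic} on $(0,1/2)$, i.e.\ $\Psi^{(n)}(w)\ge 0$ for all $n$ and all $w\in(0,1/2)$. The substitution gives $1+x(1-x)=\tfrac54-w^{2}$, whence
\[
\Psi(w)=\frac{1}{\tfrac54-w^{2}}=\sum_{k=0}^{\infty}\Bigl(\tfrac45\Bigr)^{k+1}w^{2k},
\]
a power series with \emph{nonnegative} coefficients that converges for $|w|<\sqrt5/2$, hence on all of $(0,1/2)$. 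Since a power series with nonnegative coefficients is absolutely monotonic on its interval of convergence (differentiate term by term), $\Psi$ is absolutely monotonic, and therefore $g$ is completely monotonic on $(0,1/2)$. Combining the two steps, $F=G\cdot g$ is a product of functions completely monotonic on $(0,1/2)$ and is thus completely monotonic there, proving the conjecture.

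As a safeguard against the preference for a self-contained argument that does not invoke \cite{Qiu-CA-51-2020}, the same centering idea applies directly to $F$. Starting from the integral representations $R(x)=\int_{0}^{\infty}\frac{\cosh(vt)-e^{-t/2}}{\sinh(t/2)}\,dt$ and $B(x)=\int_{0}^{\infty}\frac{\cosh(vt)}{\cosh(t/2)}\,dt$ with $v=x-1/2$, termwise expansion of $\cosh(vt)$ yields
\[
R(x)=4\ln 2+\sum_{m\ge 1}(2^{2m+2}-2)\,\zeta(2m+1)\,v^{2m},\qquad
B(x)=\sum_{m\ge 0}2^{2m+2}\,\beta(2m+1)\,v^{2m},
\]
where $\beta$ denotes the Dirichlet beta function; note that each $v^{2m}$ is itself completely monotonic on $(0,1/2)$, since there $v<0$ gives $(-1)^{n}\tfrac{d^{n}}{dx^{n}}v^{2m}=\tfrac{(2m)!}{(2m-n)!}(1/2-x)^{2m-n}\ge 0$. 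Multiplying the $B$-series by $\sum_{k}(4/5)^{k+1}v^{2k}$ and subtracting from the $R$-series reduces the conjecture to the single chain of inequalities $a_{m}\ge 0$, where $a_{m}$ is the resulting coefficient of $v^{2m}$ in $F$. Here the difficulty is analytic rather than structural: both $R$ and $B/[1+x(1-x)]$ have coefficients of exact order $2^{2m+2}$, so their leading parts cancel and establishing $a_{m}\ge 0$ demands a delicate comparison of $\zeta(2m+1)$ against a convolution of the values $\beta(2j+1)$. The product route above is preferable precisely because it sidesteps this near-cancellation entirely, leaving the complete monotonicity of $1/[1+x(1-x)]$ as the only point that requires a new idea.
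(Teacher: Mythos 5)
Your argument is correct, but it takes a genuinely different route from the paper's. The paper works directly with the expansions $R(x)=\sum_{n\ge 0}u_n t^{2n}$ and $B(x)=\sum_{n\ge 0}v_n t^{2n}$ in $t=1/2-x$ (quoted from Lemma 2.2 of \cite{Qiu-CA-51-2020}), expands $B(x)/[1+x(1-x)]$ by the same geometric-series and Cauchy-product device you apply to $1/(5/4-t^2)$, and then shows by hand that every coefficient $s_n=u_n-(4/5)^{n+1}\sum_{k=0}^n(5/4)^k v_k$ of $F$ is positive: the Euler-number bound $v_k=\pi^{2k+1}\left\vert E_{2k}\right\vert/(2k)!<4^{k+1}$ turns the inner sum into a geometric series summing to $2^{2n+2}(1-5^{-n-1})$, while $(1-2^{-s})\zeta(s)>1$ gives $u_n>2^{2n+2}$, so $s_n>2^{2n+2}5^{-n-1}>0$ --- the ``delicate comparison'' you feared in your fallback route (which is precisely the paper's route) is in fact a two-line estimate. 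Your main argument instead factors $F=G\cdot g$ with $G=[1+x(1-x)]R-B$ and $g=1/[1+x(1-x)]$, imports the complete monotonicity of $G$ on $(0,1/2)$ from Theorem 1.1 of \cite{Qiu-CA-51-2020} (a result this paper records in its introduction), and finishes with the Leibniz-rule product lemma plus the absolute monotonicity of $w\mapsto 1/(5/4-w^2)=\sum_{k\ge 0}(4/5)^{k+1}w^{2k}$; each of these steps is sound. What your route buys is brevity and a clean structural explanation of why the conjecture holds; what it gives up is self-containedness, since you lean on the main theorem of \cite{Qiu-CA-51-2020}, whose proof is of comparable depth to the conjecture itself, whereas the paper uses only the elementary series expansions from that reference.
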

	
	Now let us consider another case in $R\left( a,b\right) $ and $B\left(
	a,b\right) $ with $a=b=x\in (0,\infty )$. In this case, we denote by%
	\begin{align}
		\mathcal{B}\left( x\right) & =B\left( x,x\right) =\frac{\Gamma \left(
			x\right) ^{2}}{\Gamma \left( 2x\right) },  \label{B} \\
		\mathcal{R}\left( x\right) & =R\left( x,x\right) =-2\psi \left( x\right)
		-2\gamma .  \label{R}
	\end{align}%
	The aim of this paper are twofold. One is to establish some new analytic
	properties of the above two functions. The other is to prove the
	Conjecture \ref{C-Qiu} to be true. Specifically, we shall present several
	series expansions and higher order monotonicity properties of $\mathcal{R}(x)
	$, $\mathcal{B}(x)$ and some functions involving them in Section \ref{sec-3}.
	Furthermore, the monotonicity of $x\mapsto \mathcal{R}\left( x\right) /%
	\mathcal{B}\left( x\right) $, $(\mathcal{B}(x)-\mathcal{R}(x))/x^{2}$ on $%
	(0,\infty )$ will be proved subsequently in Section \ref{sec-4}. In Section \ref{sec-5}, we will
	give an affirmative answer to the Conjecture \ref{C-Qiu}. In the last
	section, several remarks are listed.
	
	\bigskip
	
	\section{Preliminaries}
	
	In order to establish some new properties of $\mathcal{B}\left(x\right) $
	and $\mathcal{R}\left( x\right)$, as well as, combinations of them, we need
	some basic knowledge points below.
	\begin{enumerate}[leftmargin=2.2em]
		\item Duplication formulas for the gamma, psi and polygamma functions (\cite[%
	p. 256, Eq. (6.1.18); p. 259. Eq. (6.3.8)]{Abramowitz-HMFFGMT-1970}):%
	\begin{align}
		\Gamma \left( 2x\right) &=\frac{2^{2x-1}}{\sqrt{\pi }}\Gamma \left( x+\frac{%
			1}{2}\right) \Gamma \left( x\right) ,  \label{g-df} \\
		\psi \left( 2x\right) &=\frac{1}{2}\psi \left( x+\frac{1}{2}\right) +\frac{1%
		}{2}\psi \left( x\right) +\ln 2,  \label{psi-df} \\
		2^{n}\psi ^{\left( n\right) }\left( 2x\right) &=\frac{1}{2}\psi ^{\left(
			n\right) }\left( x+\frac{1}{2}\right) +\frac{1}{2}\psi ^{\left( n\right)
		}\left( x\right) \text{ \ for }n\in \mathbb{N}.  \label{pn-df}
	\end{align}
	\item The series and integral representations of psi and polygamma functions (%
	\cite[p. 259-260, Eq. (6.3.22), Eq. (6.3.16), Eq. (6.4.10)]{Abramowitz-HMFFGMT-1970}): 
	\begin{equation}
		\psi (x)+\gamma =\int\limits_{0}^{\infty }\frac{e^{-t}-e^{-xt}}{1-e^{-t}}dt
		\label{psi-ir}
	\end{equation}and
	\begin{equation}
		\psi ^{(n)}(x)=\begin{dcases}
			-\gamma -\dfrac{1}{x}+\sum\limits_{k=1}^{\infty }\dfrac{x}{k(k+x)} & \text{%
			for}\quad n=0, \\ 
		(-1)^{n+1}n!\sum\limits_{k=0}^{\infty }\dfrac{1}{(x+k)^{n+1}} & \text{for}%
		\quad n\in \mathbb{N}. 
		\end{dcases}
		 \label{psi-sr}
	\end{equation}
	\item  Recurrence formula for the psi and polygamma functions (\cite[p. 260,
	Eq. (6.4.6)]{Abramowitz-HMFFGMT-1970}):%
	\begin{equation}
		\psi ^{(n)}(x+1)-\psi ^{(n)}(x)=\left( -1\right) ^{n}\frac{n!}{x^{n+1}}\text{
			\ for }n\in \mathbb{N}_{0}:\mathbb{=N\cup }\left\{ 0\right\} .  \label{pn-rf}
	\end{equation}
	\item The Wallis ratio $W_{n}$ is defined on $n\in \mathbb{N}_{0}$ by 
	\begin{equation}
		W_{n}=\frac{\left( 2n-1\right) !!}{\left( 2n\right) !!}=\frac{\left(
			1/2\right) _{n}}{n!}=\frac{\Gamma \left( n+1/2\right) }{\Gamma \left(
			1/2\right) \Gamma \left( n+1\right) },  \label{Wn}
	\end{equation}
	which satisfies the following inequality (see \cite{Chen-PAMS-133-2005})
	\begin{equation}
		\frac{1}{\sqrt{\pi \left( n+4/\pi -1\right) }}<W_{n}<\frac{1}{\sqrt{\pi
				\left( n+1/4\right) }}  \label{Wn<>}\quad \text{for}\ n\in\mathbb{N}.
	\end{equation}%
	\item The asymptotic formulas for the gamma, psi and polygamma functions: as $x\rightarrow \infty $,%
	\begin{align}
		\Gamma \left( x\right)  &\thicksim \sqrt{2\pi }x^{x-1/2}e^{-x},\qquad \psi \left( x\right) \thicksim \ln x-\frac{1}{2x},  \label{g,psi-af} \\
		\psi ^{\left( n\right) }\left( x\right)  &\thicksim \frac{\left( -1\right)
			^{n-1}\left( n-1\right) !}{x^{n}}+\frac{1}{2}\frac{\left( -1\right) ^{n-1}n!%
		}{x^{n+1}} \label{pg-af}
	\end{align}%
	(\cite[ps. 258--260, Eqs. (6.1.40), (6.3.18), (6.4.11)]%
	{Abramowitz-HMFFGMT-1970}).
	\item The value of hypergeometric function at $x=1$: for $c-a-b>0$,%
	\begin{equation}
		F\left( a,b;c;1\right) =\frac{\Gamma \left( c\right) \Gamma \left(
			c-a-b\right) }{\Gamma \left( c-a\right) \Gamma \left( c-b\right) }
		\label{F-1}
	\end{equation}%
	(\cite[p. 556, Eq. (15.1.20)]{Abramowitz-HMFFGMT-1970}).
	\item The Bernoulli polynomial $B_{n}\left( x\right) $ is defined by%
	\begin{equation*}
		\frac{te^{xt}}{e^{t}-1}=\sum_{n=0}^{\infty }B_{n}\left( x\right) \frac{t^{n}%
		}{n!}\text{ \ \ (}\left\vert t\right\vert <2\pi \text{) \ (\cite[p. 802, Eq.
			(23.1.1)]{Abramowitz-HMFFGMT-1970}),}
	\end{equation*}%
	while the Euler polynomial $E_{n}\left( x\right) $ is defined as%
	\begin{equation*}
		\frac{2e^{xt}}{e^{t}+1}=\sum_{n=0}^{\infty }E_{n}\left( x\right) \frac{t^{n}%
		}{n!}\text{ \ \ (}\left\vert t\right\vert <\pi \text{) \ (\cite[p. 802, Eq.
			(23.1.1)]{Abramowitz-HMFFGMT-1970})}.
	\end{equation*}%
	The Euler number $E_{n}$ is given by%
	\begin{equation*}
		E_{n}=2^{n}E_{n}\left( \frac{1}{2}\right) \text{ \ for }n\in \mathbb{N}_{0}%
		\text{,}
	\end{equation*}%
	which satisfies the double inequality 
	\begin{equation}
		\frac{4^{n+1}\left( 2n\right) !}{\pi ^{2n+1}}\frac{1}{1+3^{-2n-1}}<\left(
		-1\right) ^{n}E_{2n}<\frac{4^{n+1}\left( 2n\right) !}{\pi ^{2n+1}}\text{ \
			for }n\in \mathbb{N}_{0}  \label{E2n<}
	\end{equation}%
	\ (\cite[p. 805, Eq. (23.1.15)]{Abramowitz-HMFFGMT-1970}).
	\item Riemann zeta function is defined by%
	\begin{equation*}
		\zeta \left( s\right) =\sum_{n=1}^{\infty }\frac{1}{n^{s}}\text{ \ (}{\Re }%
		\left( s\right) >1\text{) \ \ \ (\cite[p. 807, Eq. (23.2.1)]%
			{Abramowitz-HMFFGMT-1970}).}
	\end{equation*}%
	Moreover, we have%
	\begin{align}
		\lambda \left( n\right) & =\sum_{k=0}^{\infty }\frac{1}{\left( 2k+1\right)
			^{n}}=\left( 1-2^{-n}\right) \zeta \left( n\right)\qquad\quad  \text{ for }n\in \mathbb{N%
		}\text{ with }n\geq 2,  \label{ln} \\
		\beta \left( 2n+1\right) & =\sum_{k=0}^{\infty }\frac{\left( -1\right) ^{k}}{%
			\left( 2k+1\right) ^{2n+1}}=\frac{\left( \pi /2\right) ^{2n+1}}{2\left(
			2n\right) !}\left\vert E_{2n}\right\vert\quad  \text{ for }n\in \mathbb{N}_{0}
		\label{b2n+1}
	\end{align}%
	(\cite[p. 807, (23.2.20), (23.2.22)]{Abramowitz-HMFFGMT-1970}).
	\item The integral representation of remainder term of Taylor formula:%
	\begin{align}
		f\left( x\right) -\sum_{k=0}^{n}\frac{f^{\left( k\right) }\left(
			x_{0}\right) }{k!}\left( x-x_{0}\right) ^{k}& =\frac{1}{n!}%
		\int_{x_{0}}^{x}\left( x-t\right) ^{n}f^{\left( n+1\right) }\left( t\right)
		dt  \notag \\
		& =\frac{\left( x-x_{0}\right) ^{n+1}}{n!}\int_{0}^{1}\left( 1-u\right)
		^{n}f^{\left( n+1\right) }\left( \xi \right) du,  \label{rtf-ir}
	\end{align}%
	where $\xi =ux+\left( 1-u\right) x_{0}$.
	\item  L'Hospital monotonic rule (\cite[Lemma 1.1]{Vamanamurthy-JMAA-183-1994}%
	), that is, the following lemma.
\end{enumerate}
	\begin{lemma}
		\label{L-LMR}Let $-\infty <a<b<\infty $, and let $f,g:[a,b]\rightarrow 
		\mathbb{R}$ be continuous functions that are differentiable on $\left(
		a,b\right) $, with $f\left( a\right) =g\left( a\right) =0$ or $f\left(
		b\right) =g\left( b\right) =0$. Assume that $g^{\prime }(x)\neq 0$ for each $%
		x$ in $(a,b)$. If $f^{\prime }/g^{\prime }$ is increasing (decreasing) on $%
		(a,b)$ then so is $f/g$.
	\end{lemma}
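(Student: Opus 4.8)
The plan is to reduce the statement to the Cauchy Mean Value Theorem, after first pinning down the sign of $g'$. Since $g$ is differentiable on $(a,b)$ and $g'(x)\neq 0$ there, Darboux's theorem (derivatives have the intermediate value property) forces $g'$ to keep a constant sign on the connected interval $(a,b)$. Consequently $g$ is strictly monotone on $[a,b]$, and together with the boundary condition $f(a)=g(a)=0$ or $f(b)=g(b)=0$ this guarantees $g(x)\neq 0$ for all $x\in(a,b)$, so that $f/g$ is well defined and differentiable on $(a,b)$; it therefore suffices to determine the sign of $(f/g)'$. I will treat the case $f(a)=g(a)=0$ first and assume $f'/g'$ is increasing; the other boundary case and the decreasing case are entirely parallel.

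Fix $x\in(a,b)$. Applying the Cauchy Mean Value Theorem on $[a,x]$ produces a point $\xi\in(a,x)$ with
\[
\frac{f(x)}{g(x)}=\frac{f(x)-f(a)}{g(x)-g(a)}=\frac{f'(\xi)}{g'(\xi)},
\]
so that $f(x)=g(x)f'(\xi)/g'(\xi)$. Substituting this into the quotient rule and factoring gives the key identity
\[
\left(\frac{f}{g}\right)'(x)=\frac{f'(x)g(x)-f(x)g'(x)}{g(x)^{2}}=\frac{g'(x)}{g(x)}\left(\frac{f'(x)}{g'(x)}-\frac{f'(\xi)}{g'(\xi)}\right).
\]
Here $g(x)=g(x)-g(a)=\int_{a}^{x}g'(t)\,dt$ has the same sign as $g'$, so $g'(x)/g(x)>0$; and since $f'/g'$ is increasing while $\xi<x$, the parenthesis is nonnegative. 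Hence $(f/g)'(x)\ge 0$ throughout $(a,b)$, and $f/g$ is increasing there (strictly so if $f'/g'$ is strictly increasing, since then the parenthesis is strictly positive).

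For the case $f(b)=g(b)=0$ I would instead apply the Cauchy Mean Value Theorem on $[x,b]$, obtaining $\xi\in(x,b)$ with $f(x)/g(x)=f'(\xi)/g'(\xi)$ and the same factored identity. Now $g(x)=g(x)-g(b)=-\int_{x}^{b}g'(t)\,dt$ has the opposite sign to $g'$, so $g'(x)/g(x)<0$, while $\xi>x$ makes the parenthesis nonpositive when $f'/g'$ is increasing; their product is again nonnegative, giving the same conclusion. The decreasing case follows verbatim upon reversing the relevant inequalities. I do not expect a serious obstacle here: the computation is mechanical once the factored form of $(f/g)'$ is in hand, and the only genuinely delicate point is the appeal to Darboux's theorem to fix $\operatorname{sgn}g'$, which is what makes the sign bookkeeping in the factor $g'(x)/g(x)$ uniform across the two boundary cases and both signs of $g'$.
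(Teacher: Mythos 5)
The paper does not actually prove this lemma: it is quoted as the l'Hospital monotone rule and attributed to \cite[Lemma 1.1]{Vamanamurthy-JMAA-183-1994}, so there is no in-paper argument to measure yours against. Your proof is correct and is essentially the standard proof from that literature: Darboux's theorem to fix $\operatorname{sgn}g'$, Cauchy's mean value theorem to write $f(x)/g(x)=f'(\xi)/g'(\xi)$ with $\xi$ on the appropriate side of $x$, and the factored form of the quotient rule $\bigl(f/g\bigr)'(x)=\frac{g'(x)}{g(x)}\bigl(\frac{f'(x)}{g'(x)}-\frac{f'(\xi)}{g'(\xi)}\bigr)$; this route has the advantage over the alternative ``auxiliary function $H=f'g/g'-f$'' proof that it never needs $f'/g'$ to be differentiable. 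One small blemish: the step $g(x)-g(a)=\int_a^x g'(t)\,dt$ is not justified under the stated hypotheses, since $g$ is only assumed differentiable on the open interval and $g'$ need not be (Riemann or Lebesgue) integrable up to the endpoint. You do not need the integral: the strict monotonicity of $g$ that you have already established from the constant sign of $g'$ (or a single application of the ordinary mean value theorem) gives $\operatorname{sgn}\bigl(g(x)-g(a)\bigr)=\operatorname{sgn}g'$ directly, and likewise at the endpoint $b$. With that cosmetic repair the argument is complete.
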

	
	\section{Expansions and complete monotonicity of $\mathcal{B}\left( x\right) 
		$ and $\mathcal{R}\left( x\right) $}\label{sec-3}
	
	We first give the higher order monotonicity of the function $x\mapsto \ln %
	\left[ x\mathcal{B}\left( x\right) \right] $ on $\left( 0,\infty \right) $.
	
	\begin{theorem}
		\label{T-RlnxB-sc}Let $\mathcal{B}\left( x\right) $ be defined by (\ref{B}).
		The following statements are valid.
		
		(i) $\mathcal{B}\left( x\right) $ satisfies%
		\begin{equation*}
			\left( -1\right) ^{n}\left[ \ln \left( \frac{x}{2}\mathcal{B}\left( x\right)
			\right) \right] ^{\left( n\right) }\left\{ 
			\begin{array}{ll}
				<0 & \text{for }n=0, \\ 
				>0 & \text{for }n=1, \\ 
				<0 & \text{for }n\geq 2.%
			\end{array}%
			\right.
		\end{equation*}%
		Consequently, the function $x\mapsto -\left[ \ln \left( x\mathcal{B}\left(
		x\right) \right) \right] ^{\prime \prime }$ is completely monotonic on $%
		\left( 0,\infty \right) $.
		
		(ii) $\ln \mathcal{B}\left( x\right) $ has the power series expansion 
		\begin{equation}
			\ln \mathcal{B}\left( x\right) =\ln 2-\ln x+\sum_{n=2}^{\infty }\left(
			-1\right) ^{n-1}\frac{\left( 2^{n}-2\right) \zeta \left( n\right) }{n}x^{n},
			\label{lnB-ps}
		\end{equation}%
		which is convergent on $\left( 0,1/2\right]$.
		
		(iii) For $n\geq 2$, the function 
		\begin{equation*}
			x\mapsto \left( -1\right) ^{n}\frac{\ln \mathcal{B}\left( x\right) +\ln
				x-\ln 2-\sum_{k=2}^{n}\left( -1\right) ^{k-1}\left( 2^{k}-2\right)
				k^{-1}\zeta \left( k\right) x^{k}}{x^{n+1}}
		\end{equation*}%
		is completely monotonic on $\left( 0,\infty \right) $.
	\end{theorem}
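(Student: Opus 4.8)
The plan is to reduce all three parts to the sign pattern of the derivatives of the single function
\begin{equation*}
h(x)=\ln\!\Big(\tfrac{x}{2}\mathcal{B}(x)\Big)=\ln x-\ln 2+2\ln\Gamma(x)-\ln\Gamma(2x),
\end{equation*}
which extends smoothly to $[0,\infty)$ with $h(0)=0$. For $n\ge1$, differentiating and using the polygamma duplication formula \eqref{pn-df} in the form $2^{n}\psi^{(n-1)}(2x)=\psi^{(n-1)}(x+\tfrac12)+\psi^{(n-1)}(x)$ (and \eqref{psi-df} when $n=1$) yields
\begin{equation*}
h^{(n)}(x)=(-1)^{n-1}\frac{(n-1)!}{x^{n}}+\psi^{(n-1)}(x)-\psi^{(n-1)}\!\Big(x+\tfrac12\Big)\quad(n\ge2),
\end{equation*}
and $h'(x)=\tfrac1x+\psi(x)-\psi(x+\tfrac12)-2\ln 2$. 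For $n\ge2$ I substitute the series \eqref{psi-sr}: the pole $(-1)^{n-1}(n-1)!/x^{n}$ cancels exactly the $k=0$ term of $\psi^{(n-1)}(x)$, leaving $h^{(n)}(x)=(-1)^{n+1}(n-1)!\,S(x)$ with $S(x)=\sum_{k\ge0}\big[(x+k+\tfrac12)^{-n}-(x+k+1)^{-n}\big]>0$. Hence $(-1)^{n}h^{(n)}(x)=-(n-1)!\,S(x)<0$ for $n\ge2$. This termwise cancellation is the main computation, but it is elementary.

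For $n=0,1$ I argue by monotonicity and boundary values. The case $n=2$ gives $h''<0$, so $h'$ decreases; since \eqref{psi-sr} and $\psi(\tfrac12)=-\gamma-2\ln2$ give $h'(0^{+})=0$, we get $h'<0$, i.e. $(-1)^{1}h'>0$. Then $h$ decreases, and $h(0^{+})=0$ (because $\tfrac{x}{2}\mathcal B(x)\to1$), so $h<0$, i.e. $(-1)^{0}h<0$. This settles (i). The stated consequence is then immediate: the constant $\ln 2$ is killed by two derivatives, so $-[\ln(x\mathcal B(x))]''=-h''$, and for every $m\ge0$, writing $N=m+2\ge2$, one has $(-1)^{m}(-h'')^{(m)}=-(-1)^{N}h^{(N)}\ge0$ by the $n\ge2$ sign above; thus $-h''$ is completely monotonic.

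For part (ii) I would substitute the classical Maclaurin series $\ln\Gamma(1+t)=-\gamma t+\sum_{n\ge2}(-1)^{n}\zeta(n)t^{n}/n$ into $\ln\mathcal B(x)=2\ln\Gamma(x)-\ln\Gamma(2x)$ through $\Gamma(x)=\Gamma(1+x)/x$ and $\Gamma(2x)=\Gamma(1+2x)/(2x)$. The $\ln x$ and $\gamma$ contributions collapse to $\ln 2-\ln x$, and the $n$-th coefficient becomes $2\cdot\frac{(-1)^{n}\zeta(n)}{n}-\frac{(-1)^{n}2^{n}\zeta(n)}{n}=(-1)^{n-1}\frac{(2^{n}-2)\zeta(n)}{n}$, which is \eqref{lnB-ps}. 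The factor $2^{n}$ gives radius of convergence $1/2$; at $x=\tfrac12$ the general term equals $(-1)^{n-1}(1-2^{1-n})\zeta(n)/n$, which is $(-1)^{n-1}/n$ up to an absolutely summable $O(2^{-n})$ correction, so the series converges on $(0,1/2]$ (and equals $\ln\mathcal B$ there by Abel's theorem).

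The crux is (iii); the obstacle is that \eqref{lnB-ps} only converges on $(0,1/2)$, so complete monotonicity on all of $(0,\infty)$ cannot be read off its coefficients. By (ii) the numerator is exactly the Taylor remainder $R_{n}(x)=h(x)-\sum_{k=0}^{n}h^{(k)}(0)x^{k}/k!$ (the $k=0,1$ terms vanish), so I would invoke the integral form \eqref{rtf-ir} with $x_{0}=0$, valid for all $x>0$ since $h\in C^{\infty}[0,\infty)$:
\begin{equation*}
(-1)^{n}\frac{R_{n}(x)}{x^{n+1}}=\frac{1}{n!}\int_{0}^{1}(1-u)^{n}\,\big[(-1)^{n}h^{(n+1)}(ux)\big]\,du.
\end{equation*}
Both the dilation $x\mapsto ux$ and integration against the positive weight $(1-u)^{n}$ preserve complete monotonicity, so it suffices that $t\mapsto(-1)^{n}h^{(n+1)}(t)$ be completely monotonic. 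For $m\ge0$ and $N=n+1+m\ge3$ one computes $(-1)^{m}\big[(-1)^{n}h^{(n+1)}\big]^{(m)}=-(-1)^{N}h^{(N)}\ge0$, again by the $n\ge2$ sign from part (i). Thus part (i) carries all the weight and bridges the local expansion to global complete monotonicity.
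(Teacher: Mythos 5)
Your proposal is correct and follows essentially the same route as the paper: both reduce all three parts to the sign pattern of the derivatives of $\ln\left(\tfrac{x}{2}\mathcal{B}(x)\right)=2\ln\Gamma(x+1)-\ln\Gamma(2x+1)$ via the duplication formula, read off the Taylor coefficients at $0$ from polygamma values at $1$, and deduce (iii) from the integral form of the Taylor remainder \eqref{rtf-ir} combined with (i). The only cosmetic difference is in (i), where you obtain $(-1)^{n}\bigl[\ln(\tfrac{x}{2}\mathcal{B}(x))\bigr]^{(n)}<0$ for $n\geq 2$ by termwise cancellation in the series representation \eqref{psi-sr}, while the paper uses the equivalent Laplace-integral representation $\psi^{(n-1)}(x+1)-\psi^{(n-1)}(x+\tfrac12)=(-1)^{n}\int_{0}^{\infty}t^{n-1}e^{-xt}\left(e^{t/2}+1\right)^{-1}dt$.
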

	
	\begin{proof}
		(i) Employing the duplication formula (\ref{g-df}), we have 
		\begin{equation}
			\mathcal{B}_{\ell }\left( x\right) :\mathcal{=}\ln \left( \frac{x}{2}%
			\mathcal{B}\left( x\right) \right) =\frac{1}{2}\ln \pi -2x\ln 2+\ln \Gamma
			\left( x+1\right) -\ln \Gamma \left( x+\frac{1}{2}\right) .  \label{Bl}
		\end{equation}%
		Differentiating and then using the integral representation of (\ref{psi-ir})
		yield 
		\begin{align*}
			{\mathcal{B}_{\ell }}^{\prime }(x)& =-2\ln 2+\psi \left( x+1\right) -\psi
			\left( x+\frac{1}{2}\right) =-2\ln 2+\int_{0}^{\infty }\frac{e^{-xt}}{%
				e^{t/2}+1}dt, \\
			{\mathcal{B}_{\ell }}^{\prime \prime }(x)& =-\int_{0}^{\infty }\frac{te^{-xt}%
			}{e^{t/2}+1}dt<0.
		\end{align*}%
		So that 
		\begin{align*}
			{\mathcal{B}_{\ell }}^{\prime }(x)& <\lim_{x\rightarrow 0^{+}}\left[ -2\ln
			2+\psi \left( x+1\right) -\psi \left( x+\frac{1}{2}\right) \right] =0, \\
			{\mathcal{B}_{\ell }}(x)& <\lim_{x\rightarrow 0^{+}}\left[ \frac{1}{2}\ln
			\pi -2x\ln 2+\ln \Gamma \left( x+1\right) -\ln \Gamma \left( x+\frac{1}{2}%
			\right) \right] =0
		\end{align*}%
		for all $x\in (0,\infty )$. Further, we have%
		\begin{equation*}
			\left( -1\right) ^{n}\mathcal{B}_{\ell }^{(n)}\left( x\right)
			=-\int_{0}^{\infty }\frac{t^{n-1}e^{-xt}}{e^{t/2}+1}dt<0\text{ \ for }n\geq
			2.
		\end{equation*}
		
		(ii) Note that 
		\begin{equation*}
			\mathcal{B}_{\ell }\left( x\right) =\ln \left( \frac{x}{2}\mathcal{B}\left(
			x\right) \right) =\ln \frac{x\Gamma \left( x\right) ^{2}}{2\Gamma \left(
				2x\right) }=2\ln \Gamma \left( x+1\right) -\ln \Gamma \left( 2x+1\right) .
		\end{equation*}%
		Differentiation leads to 
		\begin{equation*}
			\mathcal{B}_{\ell }^{\left( n\right) }\left( x\right) =2\psi ^{\left(
				n-1\right) }\left( x+1\right) -2^{n}\psi ^{\left( n-1\right) }\left(
			2x+1\right) \text{ \ for }n\geq 1.
		\end{equation*}%
		Hence, $\mathcal{B}_{\ell }\left( 0\right) =\mathcal{B}_{\ell }^{\prime
		}\left( 0\right) =0$ and by \eqref{psi-sr}, 
		\begin{align*}
			\mathcal{B}_{\ell }^{\left( n\right) }\left( 0\right) & =\left(
			2-2^{n}\right) \psi ^{\left( n-1\right) }\left( 1\right) =\left(
			2-2^{n}\right) \left( -1\right) ^{n}\left( n-1\right)
			!\sum\limits_{k=0}^{\infty }\frac{1}{(k+1)^{n}} \\
			& =\left( 2-2^{n}\right) \left( -1\right) ^{n}\left( n-1\right) !\zeta
			\left( n\right) \text{ \ for }n\geq 2.
		\end{align*}%
		It then follows that%
		\begin{equation*}
			\mathcal{B}_{\ell }\left( x\right)=\mathcal{B}_{\ell }\left( 0\right) +%
			\mathcal{B}_{\ell }^{\prime }\left( 0\right) x+\sum_{n=2}^{\infty }\frac{%
				\mathcal{B}_{\ell }^{\left( n\right) }\left( 0\right) }{n!}x^{n}
			=\sum_{n=2}^{\infty }\left( -1\right) ^{n-1}\frac{\left( 2^{n}-2\right)
				\zeta \left( n\right) }{n}x^{n},
		\end{equation*}%
		which is convergent on $\left( 0,1/2\right] $.
		
		(iii) By the integral representation of the reminder term of the Taylor
		formula (\ref{rtf-ir}), we have 
		\begin{equation*}
			\left( -1\right) ^{n}\frac{\mathcal{B}_{\ell }\left( x\right) -\sum_{k=2}^{n}%
				\frac{\mathcal{B}_{\ell }^{\left( k\right) }\left( 0\right) }{k!}x^{k}}{%
				x^{n+1}}=-\int_{0}^{1}\frac{\left( 1-u\right) ^{n}}{n!}\left( -1\right)
			^{n+1}\mathcal{B}_{\ell }^{\left( n+1\right) }\left( ux\right) du.
		\end{equation*}%
		By the assertion (i) of this theorem, the desired complete monotonicity
		follows. This completes the proof.
	\end{proof}
	
	The following theorem reveals the hypergeometric series and power series
	representations of $\mathcal{B}\left( x\right) $.
	
	\begin{theorem}
		\label{T-B-hs}The following statements are valid.
		\begin{enumerate}[leftmargin=2.2em,label=(\roman*)]
			\item The expansions 
			\begin{equation}
				\mathcal{B}\left( x\right) =2^{1-2x}\sum_{n=0}^{\infty }\frac{W_{n}}{x+n}%
				=2\sum_{n=0}^{\infty }\frac{\left( 1-x\right) _{n}}{\left( x+n+1\right) n!}.
				\label{B-hs}
			\end{equation}%
			hold for $x>0$, where $W_{n}$ is the Wallis ratio defined by (\ref{Wn}).
			\item The functions $\mathcal{B}$ and 
			\begin{equation}
				x\mapsto \mathcal{B}r_{1}\left( x\right) =2^{2x-1}\mathcal{B}\left( x\right)
				-\sum_{k=0}^{n}\frac{W_{k}}{x+k}  \label{Br1}
			\end{equation}%
			is completely monotonic on $\left( 0,\infty \right) $, while the function 
			\begin{equation*}
				x\mapsto \mathcal{B}r_{2}\left( x\right) =\mathcal{B}\left( x\right)
				-2\sum_{k=0}^{n}\frac{\left( 1-x\right) _{k}}{\left( x+k+1\right) k!}
			\end{equation*}%
			is completely monotonic on $\left( 0,1\right) $.
			\item For $x\in \left( 0,1\right) $, the function $\mathcal{B}\left( x\right) 
			$ has the power series representation 
			\begin{equation}
				\mathcal{B}\left( x\right) =\frac{2}{x}\sum_{k=0}^{\infty }a_{k}x^{k},
				\label{B-ps}
			\end{equation}%
			where $a_{0}=1$ and for $k\geq 1$, 
			\begin{equation}
				a_{k}=\left( -1\right) ^{k}\left[ \frac{\left( \ln 4\right) ^{k}}{k!}%
				-\sum_{j=0}^{k-1}\left( \sum_{n=1}^{\infty }\frac{W_{n}}{n^{k-j}}\right) 
				\frac{\left( \ln 4\right) ^{j}}{j!}\right] .  \label{ak}
			\end{equation}
		\end{enumerate}
	\end{theorem}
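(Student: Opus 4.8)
The plan is to read both identities in (i) off Euler's beta integral, then bootstrap the complete monotonicity in (ii) from those representations, and finally obtain (iii) by a Cauchy product.

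For part (i) I would first apply the duplication formula \eqref{g-df} to write $\mathcal{B}(x)=2^{1-2x}\sqrt{\pi}\,\Gamma(x)/\Gamma(x+1/2)=2^{1-2x}B(x,1/2)$. Feeding the integral $B(x,1/2)=\int_0^1 t^{x-1}(1-t)^{-1/2}\,dt$ with the binomial series $(1-t)^{-1/2}=\sum_{n\ge0}W_n t^n$, whose coefficients are exactly the Wallis ratios \eqref{Wn}, and integrating term by term (legitimate for $x>0$ since every term is positive) yields the first equality $\mathcal{B}(x)=2^{1-2x}\sum_{n\ge0}W_n/(x+n)$. For the second equality I would use $\mathcal{B}(x)=2\int_0^1 t^{x}(1-t)^{x-1}\,dt$ (that is $\mathcal{B}(x)=2B(x+1,x)$, checked against the duplication formula) together with $(1-t)^{x-1}=\sum_{k\ge0}(1-x)_k t^k/k!$; term-by-term integration of $\int_0^1 t^{x+k}\,dt=1/(x+k+1)$ gives the claimed $2\sum_{k\ge0}(1-x)_k/[(x+k+1)k!]$. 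To cover all $x>0$ at once, where the terms need no longer be positive, I would recognize this sum as $\tfrac{2}{x+1}F(1-x,x+1;x+2;1)$ and evaluate it by Gauss's formula \eqref{F-1}, which returns $\mathcal{B}(x)$ after the cancellations $c-a-b=x$, $c-a=2x+1$, $c-b=1$.

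For part (ii) the functions $\mathcal{B}$ and $\mathcal{B}r_1$ are immediate. Writing $\mathcal{B}(x)=\int_0^1[t(1-t)]^{x-1}\,dt$ gives $(-1)^m\mathcal{B}^{(m)}(x)=\int_0^1|\ln(t(1-t))|^m[t(1-t)]^{x-1}\,dt\ge0$, so $\mathcal{B}$ is completely monotonic; and since $2^{2x-1}\mathcal{B}(x)=\sum_{k\ge0}W_k/(x+k)$ by (i), the function $\mathcal{B}r_1$ is just the tail $\sum_{k\ge n+1}W_k/(x+k)$, a positive combination of the completely monotonic functions $1/(x+k)$. The genuine obstacle is $\mathcal{B}r_2$, because its terms $(1-x)_k/[(x+k+1)k!]$ carry the polynomial $(1-x)_k$ and are \emph{not} individually completely monotonic. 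My plan is to write $\mathcal{B}r_2(x)=2\int_0^1 t^{x}R_n(x,t)\,dt$, where $R_n$ is the order-$n$ Taylor remainder (in $t$) of $(1-t)^{x-1}$, and to insert its integral form $R_n(x,t)=\frac{(1-x)_{n+1}}{n!}\int_0^t(t-s)^n(1-s)^{x-n-2}\,ds$ from \eqref{rtf-ir}. Interchanging the integrations and grouping $t^{x}(1-s)^{x}=[t(1-s)]^{x}$ exhibits $\mathcal{B}r_2(x)=\frac{(1-x)_{n+1}}{n!}H(x)$ with $H(x)=2\int_0^1\!\int_0^t[t(1-s)]^{x}(1-s)^{-n-2}(t-s)^n\,ds\,dt$. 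Since $t(1-s)\in(0,1)$ on the domain, $H$ is a Laplace transform of a positive measure and hence completely monotonic. The decisive step is then the Leibniz identity $(-1)^m[(c-x)H]^{(m)}=(c-x)(-1)^mH^{(m)}+m(-1)^{m-1}H^{(m-1)}$, which shows that multiplication by any linear factor $c-x$ that is nonnegative on the interval preserves complete monotonicity; applying it to the factors $1-x,2-x,\dots,n+1-x$ of $(1-x)_{n+1}$, each positive on $(0,1)$, delivers the complete monotonicity of $\mathcal{B}r_2$ there.

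For part (iii) I would expand the first representation about $x=0$. Factoring out $2/x$ gives $\tfrac{x}{2}\mathcal{B}(x)=2^{-2x}\big[1+x\sum_{n\ge1}W_n/(x+n)\big]$, and I would expand $2^{-2x}=\sum_{j\ge0}(-1)^j(\ln4)^j x^j/j!$ together with $x\sum_{n\ge1}W_n/(x+n)=\sum_{i\ge1}(-1)^{i-1}\big(\sum_{n\ge1}W_n/n^{i}\big)x^i$, the inner series converging because $W_n=O(n^{-1/2})$ and the double-sum interchange being justified by absolute convergence for $|x|<1$. The Cauchy product of these two power series, after separating the top index $j=k$, reproduces the stated $a_k$ exactly, with $a_0=1$; the radius of convergence is $1$ since the nearest singularity of $x\sum_{n\ge1}W_n/(x+n)$ lies at $x=-1$, so the expansion holds on $(0,1)$. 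I expect the bookkeeping in (i) and (iii) to be routine, with the real difficulty concentrated in $\mathcal{B}r_2$: the key is to trade the term-by-term polynomial $(1-x)_k$ for the single global factor $(1-x)_{n+1}$ multiplying a manifestly completely monotonic integral, and then to invoke the preservation of complete monotonicity under positive linear factors.
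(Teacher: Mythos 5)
Your parts (i) and (iii), and the treatment of $\mathcal{B}$ and $\mathcal{B}r_{1}$ in (ii), are correct and essentially the paper's own argument: part (i) reduces in both cases to Gauss's evaluation \eqref{F-1} (the paper goes directly to $F(1/2,x;x+1;1)$ and $F(x+1,1-x;x+2;1)$ without the intermediate Euler integrals, but this is cosmetic), and part (iii) is the same Cauchy product of $2^{-2x}$ with the geometric expansion of $x/(x+n)$, yielding \eqref{ak} exactly.

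The one place you diverge is $\mathcal{B}r_{2}$, and you diverge because of a false premise: the terms $(1-x)_{k}/[(x+k+1)k!]$ \emph{are} individually completely monotonic on $(0,1)$. Indeed $(1-x)_{k}=\prod_{j=1}^{k}(j-x)$, and each factor $j-x$ with $j\geq 1$ is nonnegative on $(0,1)$, has $-(j-x)'=1\geq 0$, and has all higher derivatives zero, so it is completely monotonic there; since finite products and pointwise-convergent sums of completely monotonic functions are completely monotonic, the tail $2\sum_{k\geq n+1}(1-x)_{k}/[(x+k+1)k!]$ is completely monotonic on $(0,1)$ with no further work --- this is precisely the paper's one-line argument. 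Your detour (the order-$n$ Taylor remainder of $(1-t)^{x-1}$ in its integral form, the double integral $H$, and the peeling off of the factors of $(1-x)_{n+1}$ one at a time) does arrive at a valid proof: the integrand is nonnegative so Tonelli justifies the interchange, the inner integral behaves like $(1-t)^{x-1}$ near $t=1$ so everything converges for $x\in(0,1)$, and your Leibniz identity is correct. But that ``decisive step'' is exactly the statement that multiplication by a nonnegative linear factor $c-x$ preserves complete monotonicity, which is the same fact that makes each series term completely monotonic on its own. So the extra machinery buys nothing; the argument is sound but collapses to the paper's shorter one once the false premise is removed.
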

	
	\begin{proof}
		(i) Using the duplication formula (\ref{g-df}) and \eqref{2F1-1} gives 
		\begin{align*}
			2^{2x-1}\frac{x\Gamma \left( x\right) ^{2}}{\Gamma \left( 2x\right) }& =%
			\sqrt{\pi }\frac{\Gamma \left( x+1\right) }{\Gamma \left( x+\frac{1}{2}%
				\right) }=\frac{\Gamma \left( 1/2\right) \Gamma \left( x+1\right) }{\Gamma
				\left( 1\right) \Gamma \left( x+1/2\right) } \\
			& =F\left( \frac{1}{2},x;x+1;1\right) =\sum_{n=0}^{\infty }\frac{\left(
				1/2\right) _{n}\left( x\right) _{n}}{\left( x+1\right) _{n}}\frac{1^{n}}{n!}%
			=\sum_{n=0}^{\infty }\frac{xW_{n}}{x+n},
		\end{align*}%
		which implies the first series of (\ref{B-hs}). Similarly, we have 
		\begin{align*}
			\frac{\Gamma \left( x\right) ^{2}}{\Gamma \left( 2x\right) } =&\frac{2}{x+1}%
			\frac{\Gamma \left( x\right) \Gamma \left( x+2\right) }{\Gamma \left(
				1\right) \Gamma \left( 2x+1\right) }=\frac{2}{x+1}F\left(
			x+1,1-x;x+2;1\right) = \\
			=&\frac{2}{x+1}\sum_{n=0}^{\infty }\frac{\left( x+1\right) _{n}\left(
				1-x\right) _{n}}{\left( x+2\right) _{n}}\frac{1^{n}}{n!}=2\sum_{n=0}^{\infty
			}\frac{\left( 1-x\right) _{n}}{\left( x+n+1\right) n!},
		\end{align*}%
		which proves the second series of (\ref{B-hs}).
		
		(ii) Since $2^{1-2x}=2\exp \left( -x\ln 4\right) $ and $1/\left( x+n\right) $
		for every $n\geq 0$ are completely monotonic on $\left( 0,\infty \right) $,
		so is the function $\mathcal{B}$ by the first series of (\ref{B-hs}). Note
		that 
		\begin{equation*}
			\mathcal{B}r_{1}\left( x\right) =2^{2x-1}\mathcal{B}\left( x\right)
			-\sum_{k=0}^{n}\frac{W_{k}}{x+k}=\sum_{k=n+1}^{\infty }\frac{W_{k}}{x+k}.
		\end{equation*}%
		A similar argument leads to the complete monotonicity of $\mathcal{B}%
		r_{1}\left( x\right) $ on $\left( 0,\infty \right) $. Obviously, $1/\left(
		x+k+1\right) $ for every $k\geq 0$ and $1-x+j$ for every $0\leq j\leq k-1$
		are completely monotonic on $\left( 0,1\right) $, so is $\mathcal{B}%
		r_{2}\left( x\right) $ because of 
		\begin{equation*}
			\mathcal{B}r_{2}\left( x\right) =\mathcal{B}\left( x\right) -2\sum_{k=0}^{n}%
			\frac{\left( 1-x\right) _{k}}{\left( x+k+1\right) k!}=2\sum_{k=n+1}^{\infty }%
			\frac{\left( 1-x\right) _{k}}{\left( x+k+1\right) k!}.
		\end{equation*}
		
		(iii) Write 
		\begin{align*}
			\sum_{n=0}^{\infty }\frac{xW_{n}}{x+n}& =1+\sum_{n=1}^{\infty }\frac{W_{n}}{n%
			}\frac{x}{1+x/n}=1+\sum_{n=1}^{\infty }\frac{W_{n}}{n}\left(
			\sum_{k=0}^{\infty }\frac{\left( -1\right) ^{k}}{n^{k}}x^{k+1}\right) \\
			& =1+\sum_{k=1}^{\infty }\left( -1\right) ^{k-1}\left( \sum_{n=1}^{\infty }%
			\frac{W_{n}}{n^{k}}\right) x^{k}, \\
			2^{-2x}& =\exp \left( -x\ln 4\right) =\sum_{k=0}^{\infty }\frac{\left(
				-1\right) ^{k}\left( \ln 4\right) ^{k}}{k!}x^{k}.
		\end{align*}%
		Then by Cauchy product formula, we obtain 
		\begin{align*}
			\frac{x}{2}\mathcal{B}\left( x\right) & =2^{-2x}\sum_{n=0}^{\infty }\frac{%
				xW_{n}}{x+n}=\left( \sum_{k=0}^{\infty }\frac{\left( -1\right) ^{k}\left(
				\ln 4\right) ^{k}}{k!}x^{k}\right) \left( 1+\sum_{k=1}^{\infty }\left(
			-1\right) ^{k-1}\left( \sum_{n=1}^{\infty }\frac{W_{n}}{n^{k}}\right)
			x^{k}\right) \\
			& =1+\sum_{k=1}^{\infty }\left( -1\right) ^{k}\left[ \frac{\left( \ln
				4\right) ^{k}}{k!}-\sum_{j=0}^{k-1}\left( \sum_{n=1}^{\infty }\frac{W_{n}}{%
				n^{k-j}}\right) \frac{\left( \ln 4\right) ^{j}}{j!}\right] x^{k},
		\end{align*}%
		which completes the proof.
	\end{proof}
	
	\begin{remark}
		The coefficients $a_{k}$ in (\ref{B-ps}) can be given by recurrence relation
		using (\ref{lnB-ps}). In fact, by (\ref{lnB-ps}), we have%
		\begin{equation*}
			\frac{x}{2}\mathcal{B}\left( x\right) =\exp \left( \sum_{n=2}^{\infty
			}\left( -1\right) ^{n-1}\frac{\left( 2^{n}-2\right) \zeta \left( n\right) }{n%
			}x^{n}\right) =\sum_{n=0}^{\infty }a_{n}x^{n}.
		\end{equation*}%
		Then $a_{0}=1$. Differentiation yields 
		\begin{equation*}
			\left( \sum_{n=2}^{\infty }\left( -1\right) ^{n-1}\left( 2^{n}-2\right)
			\zeta \left( n\right) x^{n-1}\right) \exp \left( \sum_{n=2}^{\infty }\left(
			-1\right) ^{n-1}\frac{\left( 2^{n}-2\right) \zeta \left( n\right) }{n}%
			x^{n}\right) =\sum_{n=0}^{\infty }na_{n}x^{n-1},
		\end{equation*}%
		that is, 
		\begin{equation*}
			\left( \sum_{n=2}^{\infty }\left( -1\right) ^{n-1}\left( 2^{n}-2\right)
			\zeta \left( n\right) x^{n}\right) \left( \sum_{n=0}^{\infty
			}a_{n}x^{n}\right) =\sum_{n=0}^{\infty }na_{n}x^{n}.
		\end{equation*}%
		Using Cauchy product formula and comparing the coefficients of $x^{n}$ gives 
		$a_{1}=0$, and for $n\geq 2$,%
		\begin{equation}
			a_{n}=\frac{1}{n}\sum_{k=2}^{n}\left( -1\right) ^{k-1}\left( 2^{k}-2\right)
			\zeta \left( k\right) a_{n-k}.  \label{an-rr}
		\end{equation}%
		In particular, $a_{2}=-\pi ^{2}/6$, $a_{3}=2\zeta \left( 3\right) $, $%
		a_{4}=-\pi ^{4}/40$.
	\end{remark}
	
	\begin{remark}
		\label{Remark2} From the proof of Theorem \ref{T-B-hs}, we easily find that 
		\begin{align*}
			2^{2x-1}\mathcal{B}\left( x\right) -\frac{1}{x}=&\sum_{n=1}^{\infty }\frac{%
				W_{n}}{x+n}=\sum_{n=1}^{\infty }\frac{W_{n}}{n}\left( \sum_{k=0}^{\infty }%
			\frac{\left( -1\right) ^{k}}{n^{k}}x^{k}\right) \\
			=&\sum_{k=0}^{\infty }\left( -1\right) ^{k}\left( \sum_{n=1}^{\infty }\frac{%
				W_{n}}{n^{k+1}}\right) x^{k}.
		\end{align*}
	\end{remark}
	
	The power series representation of $\mathcal{R}\left( x\right) $ on $\left(
	0,1\right) $ is presented in the following theorem.
	
	\begin{theorem}
		\label{T-R-sc}Let $x\in \left( 0,1\right) $. We have 
		\begin{equation}
			\mathcal{R}\left( x\right) =-2\psi \left( x\right) -2\gamma =\frac{2}{x}%
			+2\sum_{n=1}^{\infty }\left( -1\right) ^{n}\zeta \left( n+1\right) x^{n}.
			\label{R-ps}
		\end{equation}%
		Moreover, for $n\geq 1$, the function 
		\begin{equation}
			x\mapsto \mathcal{R}r_{n}\left( x\right) =\frac{\left( -1\right) ^{n-1}}{%
				x^{n+1}}\left[ \mathcal{R}\left( x\right) -\frac{2}{x}-2\sum_{k=1}^{n}\left(
			-1\right) ^{k}\zeta \left( k+1\right) x^{k}\right]  \label{Rrn}
		\end{equation}%
		is completely monotonic on $\left( 0,\infty \right) $.
	\end{theorem}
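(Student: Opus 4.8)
The plan is to obtain the power series \eqref{R-ps} directly from the series representation of $\psi$ and then to prove the complete monotonicity through the integral form of the Taylor remainder, exactly as in the proofs of Theorems~\ref{T-RlnxB-sc} and~\ref{T-B-hs}. For the first assertion I would write $\mathcal{R}(x)=2(-\psi(x)-\gamma)$ and insert the case $n=0$ of \eqref{psi-sr}, namely $-\psi(x)-\gamma=1/x-\sum_{k=1}^{\infty}x/[k(k+x)]$, which gives $\mathcal{R}(x)=2/x-2\sum_{k=1}^{\infty}x/[k(k+x)]$. For $x\in(0,1)$ one has $x/k<1$ for every $k\ge1$, so each summand may be expanded as the geometric series $x/[k(k+x)]=\sum_{n=1}^{\infty}(-1)^{n-1}x^{n}/k^{n+1}$, and the resulting double series converges absolutely on $(0,1)$, permitting the interchange of the two summations. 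Carrying out the inner sum over $k$ produces $\sum_{k=1}^{\infty}k^{-(n+1)}=\zeta(n+1)$, and collecting the sign yields $\mathcal{R}(x)=2/x+2\sum_{n=1}^{\infty}(-1)^{n}\zeta(n+1)x^{n}$, which is \eqref{R-ps}.

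For the complete monotonicity I would introduce the auxiliary function $g(x)=\mathcal{R}(x)-2/x=-2\psi(x)-2\gamma-2/x$. By the first assertion $g$ is analytic near $0$ with $g(0)=0$ and Taylor coefficients $g^{(k)}(0)/k!=2(-1)^{k}\zeta(k+1)$. The key computation is the closed form of its higher derivatives valid on the \emph{whole} half-line: differentiating $g$ and using the case $n\in\mathbb{N}$ of \eqref{psi-sr} together with $(1/x)^{(m)}=(-1)^{m}m!/x^{m+1}$, the $1/x$ term cancels precisely the $k=0$ term of the polygamma series, leaving
\begin{equation*}
	g^{(m)}(x)=2(-1)^{m}m!\sum_{k=1}^{\infty}\frac{1}{(x+k)^{m+1}}\qquad\text{for }m\ge1,\ x>0 .
\end{equation*}
This is the essential point: although the power series \eqref{R-ps} converges only on $(0,1)$, the formula above shows that $g^{(m)}$ extends to a smooth function on all of $(0,\infty)$.

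Then I would apply the integral representation \eqref{rtf-ir} to $g$ at $x_{0}=0$. Since $g(0)=0$ and $g^{(k)}(0)/k!=2(-1)^{k}\zeta(k+1)$, the bracketed quantity in \eqref{Rrn} is exactly the Taylor remainder $g(x)-\sum_{k=1}^{n}\bigl[g^{(k)}(0)/k!\bigr]x^{k}$, so that $\mathcal{R}r_{n}(x)=\tfrac{(-1)^{n-1}}{n!}\int_{0}^{1}(1-u)^{n}g^{(n+1)}(ux)\,du$. Substituting the closed form of $g^{(n+1)}$ and using $(-1)^{n-1}(-1)^{n+1}=1$ and $(n+1)!/n!=n+1$, the signs combine to give the manifestly nonnegative expression
\begin{equation*}
	\mathcal{R}r_{n}(x)=2(n+1)\int_{0}^{1}(1-u)^{n}\sum_{k=1}^{\infty}\frac{1}{(ux+k)^{n+2}}\,du .
\end{equation*}
For fixed $u\in[0,1]$ and $k\ge1$ the map $x\mapsto(ux+k)^{-(n+2)}$ is completely monotonic on $(0,\infty)$, since $(-1)^{j}\frac{d^{j}}{dx^{j}}(ux+k)^{-(n+2)}=\tfrac{(n+j+1)!}{(n+1)!}u^{j}(ux+k)^{-(n+j+2)}\ge0$; as the weight $(1-u)^{n}$ is nonnegative, the sum and integral of completely monotonic functions is again completely monotonic, giving the claim.

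The main obstacle is the analytic bookkeeping needed to justify interchanging differentiation with both the infinite sum and the integral. This is where I would spend care: differentiating under the $k$-sum is legitimate because $(ux+k)^{-(n+j+2)}\le k^{-(n+j+2)}$ provides domination by $\zeta(n+j+2)$, uniformly for $x$ in compact subsets of $(0,\infty)$, and the same bound makes differentiation under the $u$-integral admissible by dominated convergence. I would stress that the power-series formula \eqref{R-ps} cannot by itself deliver the monotonicity on $(0,\infty)$ precisely because its domain of convergence is only $(0,1)$; the integral representation is indispensable, as it is what continues $g^{(n+1)}$ to the entire positive axis.
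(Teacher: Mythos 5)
Your proposal is correct and takes essentially the same route as the paper: your auxiliary function $g(x)=\mathcal{R}(x)-2/x$ is exactly the paper's $\mathcal{\tilde{R}}(x)=-2\psi(x+1)-2\gamma$ (your closed form $g^{(m)}(x)=2(-1)^{m}m!\sum_{k\geq 1}(x+k)^{-(m+1)}$ is just $-2\psi^{(m)}(x+1)$ written out via \eqref{psi-sr}), and the complete monotonicity is obtained from the same integral form \eqref{rtf-ir} of the Taylor remainder. The only cosmetic difference is that you derive the power series by geometric expansion of $x/[k(k+x)]$ rather than by reading off the Taylor coefficients from $\psi^{(n)}(1)=(-1)^{n+1}n!\,\zeta(n+1)$.
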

	
	\begin{proof}
		Let 
		\begin{equation*}
			\mathcal{\tilde{R}}\left( x\right) =\mathcal{R}\left( x\right) -\frac{2}{x}%
			=-2\psi \left( x+1\right) -2\gamma .
		\end{equation*}%
		Then $\mathcal{\tilde{R}}\left( 0\right) =0$ and 
		\begin{equation*}
			\mathcal{\tilde{R}}^{\left( n\right) }\left( x\right) =-2\psi ^{\left(
				n\right) }\left( x+1\right) \text{ \ and }\mathcal{\tilde{R}}^{\left(
				n\right) }\left( 0\right) =-2\psi ^{\left( n\right) }\left( 1\right)
			=2\left( -1\right) ^{n}n!\zeta \left( n+1\right) .
		\end{equation*}%
		Hence, 
		\begin{equation*}
			\mathcal{\tilde{R}}\left( x\right) =\mathcal{R}\left( x\right) -\frac{2}{x}=%
			\mathcal{\tilde{R}}\left( 0\right) +\sum_{n=1}^{\infty }\frac{\mathcal{%
					\tilde{R}}^{\left( n\right) }\left( 0\right) }{n!}x^{n}=2\sum_{n=1}^{\infty
			}\left( -1\right) ^{n}\zeta \left( n+1\right) x^{n},
		\end{equation*}%
		which is clearly convergent on $\left( 0,1\right) $ (see also \cite[p. 259.
		Eq. (6.3.14)]{Abramowitz-HMFFGMT-1970}).
		
		Using the integral representation of remainder term of Taylor formula (\ref%
		{rtf-ir}) gives 
		\begin{align*}
			\mathcal{R}r_{n}\left( x\right) & =\frac{1}{n!}\int_{0}^{1}\left( 1-u\right)
			^{n}\left( -1\right) ^{n-1}\mathcal{\tilde{R}}^{\left( n+1\right) }\left(
			ux\right) du \\
			& =\frac{2}{n!}\int_{0}^{1}\left( 1-u\right) ^{n}\left( -1\right) ^{n}\psi
			^{\left( n+1\right) }\left( {ux+1}\right) du,
		\end{align*}%
		which is completely monotonic on $\left( 0,\infty \right) $. This completes
		the proof.
	\end{proof}
	
	Finally, we give the higher order monotonicity of the function%
	\begin{equation*}
		x\mapsto \mathcal{\tilde{R}}_{\theta ,n}\left( x\right) =\frac{1}{x^{\theta }%
		}\left( \mathcal{R}\left( x\right) -2\sum_{k=0}^{n-1}\frac{1}{k+x}\right) .
	\end{equation*}
	
	\begin{theorem}
		Let $n\in \mathbb{N}$ and $\theta \geq 1$. The function $x\mapsto \mathcal{%
			\tilde{R}}_{\theta ,n}^{\prime }\left( x\right) $ is completely monotonic on 
		$\left( 0,\infty \right) $. In particular, the function%
		\begin{equation*}
			x\mapsto \mathcal{\tilde{R}}_{1,1}\left( x\right) =\frac{x\mathcal{R}\left(
				x\right) -2}{x^{2}}
		\end{equation*}%
		is increasing from $\left( 0,\infty \right) $ onto $\left( -\pi
		^{2}/3,0\right) $.
	\end{theorem}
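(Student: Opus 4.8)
The plan is to reduce the whole statement to the complete monotonicity of a single, simpler function. First I would use the recurrence (\ref{pn-rf}) in the case $n=0$, namely $\psi(y+1)-\psi(y)=1/y$, to telescope $\sum_{k=0}^{n-1}1/(k+x)=\psi(x+n)-\psi(x)$; the singular parts then cancel and
\[
\mathcal{\tilde{R}}_{\theta ,n}(x)=\frac{1}{x^{\theta}}\bigl(-2\psi(x+n)-2\gamma\bigr)=-\frac{2\,u(x)}{x^{\theta}},\qquad u(x):=\psi(x+n)+\gamma .
\]
Since $\psi$ is increasing, $\psi(x+n)>\psi(n)\ge\psi(1)=-\gamma$, so $u>0$ and $\mathcal{\tilde{R}}_{\theta ,n}<0$. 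Because the negative of the derivative of a completely monotonic function is again completely monotonic, it suffices to prove that $G_{\theta}(x):=-\mathcal{\tilde{R}}_{\theta ,n}(x)=2u(x)/x^{\theta}$ is completely monotonic on $(0,\infty)$; this yields at once that $\mathcal{\tilde{R}}_{\theta ,n}'=-G_{\theta}'$ is completely monotonic.

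Next I would dispose of the parameter $\theta$. Writing
\[
G_{\theta}(x)=\frac{2u(x)}{x}\cdot\frac{1}{x^{\theta-1}},
\]
and recalling that $x\mapsto x^{-(\theta-1)}$ is completely monotonic for every $\theta\ge1$ and that a product of completely monotonic functions is completely monotonic, the claim for all $\theta\ge1$ follows from the single case $\theta=1$. Thus the entire theorem rests on showing that $x\mapsto u(x)/x=[\psi(x+n)+\gamma]/x$ is completely monotonic on $(0,\infty)$.

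This last point is the heart of the matter, and the main obstacle. The key observation I would exploit is that $u'(x)=\psi'(x+n)=\sum_{k\ge0}(x+n+k)^{-2}$ is completely monotonic by the series (\ref{psi-sr}), while $u(0)=\psi(n)+\gamma=H_{n-1}\ge0$. Splitting off the constant and rescaling via $s=xv$ gives the representation
\[
\frac{u(x)}{x}=\frac{u(0)}{x}+\frac{1}{x}\int_{0}^{x}u'(s)\,ds=\frac{H_{n-1}}{x}+\int_{0}^{1}u'(xv)\,dv .
\]
For each fixed $v\in(0,1)$ the map $x\mapsto u'(xv)$ is completely monotonic, being the composition of the completely monotonic $u'$ with the positive dilation $x\mapsto vx$; hence the average $\int_{0}^{1}u'(xv)\,dv$ is completely monotonic, and together with the manifestly completely monotonic term $H_{n-1}/x$ this proves that $u(x)/x$ is completely monotonic. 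The difficulty is precisely in spotting this averaging representation: the naive product factorisation $u(x)\cdot x^{-1}$ cannot be used, because $u$ is increasing and therefore not itself completely monotonic.

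Finally, for the ``in particular'' assertion I would specialise to $\theta=n=1$, so that $\mathcal{\tilde{R}}_{1,1}(x)=(x\mathcal{R}(x)-2)/x^{2}$. Complete monotonicity of $\mathcal{\tilde{R}}_{1,1}'$ forces $\mathcal{\tilde{R}}_{1,1}'>0$, whence $\mathcal{\tilde{R}}_{1,1}$ is strictly increasing. For the endpoints I would read the limit at $0^{+}$ from the expansion (\ref{R-ps}) of Theorem \ref{T-R-sc}, giving $\mathcal{\tilde{R}}_{1,1}(x)=x^{-1}[\mathcal{R}(x)-2/x]=-2\zeta(2)+2\zeta(3)x-\cdots\to-\pi^{2}/3$, and the limit at $\infty$ from the asymptotics (\ref{g,psi-af}), $\mathcal{R}(x)\sim-2\ln x$, so that $\mathcal{\tilde{R}}_{1,1}(x)\sim-2(\ln x)/x\to0^{-}$. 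Being continuous, strictly increasing and negative with these two limits, the function maps $(0,\infty)$ onto the open interval $(-\pi^{2}/3,0)$, as claimed.
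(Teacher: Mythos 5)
Your proposal is correct, but it reaches the conclusion by a genuinely different route than the paper. The paper computes the $m$-th derivative of $\tilde{\mathcal{R}}_{\theta,n}(x)=-2[\psi(n+x)+\gamma]/x^{\theta}$ directly by the Leibniz rule, writes it as $r_{\theta,n}(x)/[x^{\theta+m}\Gamma(\theta)]$, and proves $r_{\theta,n}>0$ by showing $r_{\theta,n}^{\prime}>0$ (this is where $\theta\geq 1$ enters, via the nonnegativity of coefficients such as $(\theta-1)\Gamma(m+\theta-1)$) together with $r_{\theta,n}(0)=\Gamma(\theta+m)(\psi(n)+\gamma)\geq 0$. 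You instead prove the stronger assertion that $-\tilde{\mathcal{R}}_{\theta,n}(x)=2[\psi(x+n)+\gamma]/x^{\theta}$ is itself completely monotonic, using only closure properties of the CM class: the factor $x^{-(\theta-1)}$ absorbs the hypothesis $\theta\geq 1$, and the core case $\theta=1$ rests on the identity $u(x)/x=u(0)/x+\int_{0}^{1}u^{\prime}(xv)\,dv$ with $u(x)=\psi(x+n)+\gamma$, $u(0)=H_{n-1}\geq 0$ and $u^{\prime}=\psi^{\prime}(\cdot+n)$ completely monotonic. That averaging lemma (if $u(0^{+})\geq 0$ and $u^{\prime}$ is CM, then $u(x)/x$ is CM) is the key idea the paper does not use; it buys a shorter, essentially computation-free argument and yields the complete monotonicity of $-\tilde{\mathcal{R}}_{\theta,n}$ itself rather than only of its derivative. (Incidentally, your $H_{n-1}\geq 0$ correctly covers $n=1$, where the paper's claim $r_{\theta,n}(0)>0$ should read $\geq 0$ since $\psi(1)+\gamma=0$.) Your endpoint computations for $\tilde{\mathcal{R}}_{1,1}$, from the expansion (\ref{R-ps}) at $0^{+}$ and the asymptotics of $\psi$ at infinity, agree with the paper's.
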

	
	\begin{proof}
		It is easy to verify that%
		\begin{align*}
			\mathcal{\tilde{R}}_{\theta ,n}\left( x\right) =& -2\frac{\psi \left(
				n+x\right) +\gamma }{x^{\theta }}, \\
			\left( -1\right) ^{k}\left( \frac{1}{x^{\theta }}\right) ^{\left( k\right)
			}=& \frac{\Gamma \left( \theta +k\right) }{\Gamma \left( \theta \right) }%
			\frac{1}{x^{\theta +k}},
		\end{align*}%
		and denote by $\psi _{k}\left( x\right) =\left( -1\right) ^{k-1}\psi
		^{\left( k\right) }\left( x\right) $. Let $m\in \mathbb{N}$. Differentiation
		yields%
		\begin{align*}
			\frac{\left( -1\right) ^{m-1}}{2}\mathcal{\tilde{R}}_{\theta ,n}^{\left(
				m\right) }\left( x\right) =& \left( -1\right) ^{m}\left( \frac{1}{x^{\theta }%
			}\right) ^{\left( m\right) }\left( \psi \left( n+x\right) +\gamma \right)  \\
			& +\left( -1\right) ^{m}\sum_{k=1}^{m}\binom{m}{k}\left( \frac{1}{x^{\theta }%
			}\right) ^{\left( m-k\right) }\left( \psi \left( n+x\right) +\gamma \right)
			^{\left( k\right) } \\
			=& \frac{\Gamma \left( \theta +m\right) }{\Gamma \left( \theta \right) }%
			\frac{\psi \left( n+x\right) +\gamma }{x^{\theta +m}} \\
			& -\sum_{k=1}^{m}\binom{m}{k}\frac{\Gamma \left( \theta +m-k\right) }{\Gamma
				\left( \theta \right) }\frac{1}{x^{\theta +m-k}}\psi _{k}\left( n+x\right) 
			\\
			:=& \frac{r_{\theta ,n}\left( x\right) }{x^{\theta +m}\Gamma \left( \theta
				\right) },
		\end{align*}%
		where%
		\begin{equation*}
			r_{\theta ,n}\left( x\right) =\Gamma \left( \theta +m\right) \left( \psi
			\left( n+x\right) +\gamma \right) -\sum_{k=1}^{m}\binom{m}{k}\Gamma \left(
			\theta +m-k\right) x^{k}\psi _{k}\left( n+x\right) 
		\end{equation*}%
		with $r_{\theta ,n}\left( 0\right) =\Gamma \left( \theta +m\right) \left(
		\psi \left( n\right) +\gamma \right) >0$ for all $m\in \mathbb{N}$.
		Differentiation again yields 
		\begin{eqnarray*}
			r_{\theta ,n}^{\prime }\left( x\right)  &=&\Gamma \left( \theta +m\right)
			\psi ^{\prime }\left( n+x\right) -\sum_{k=1}^{m}\binom{m}{k}\Gamma \left(
			\theta +m-k\right) kx^{k-1}\psi _{k}\left( n+x\right)  \\
			&&-\sum_{k=1}^{m}\binom{m}{k}\Gamma \left( \theta +m-k\right) x^{k}\psi
			_{k}^{\prime }\left( n+x\right) .
		\end{eqnarray*}%
		Since $\psi _{k}^{\prime }\left( x\right) =-\psi _{k+1}\left( x\right) $, by
		a shift of index, $r_{\theta ,n}^{\prime }\left( x\right) $ can be written
		as 
		\begin{eqnarray*}
			r_{\theta ,n}^{\prime }\left( x\right)  &=&\left[ \Gamma \left( \theta
			+m\right) -m\Gamma \left( \theta +m-1\right) \right] \psi _{1}\left(
			n+x\right) +\Gamma \left( \theta \right) x^{m}\psi _{m+1}\left( n+x\right) 
			\\
			&&+\sum_{k=2}^{m}\left[ \binom{m}{k-1}\Gamma \left( \theta +m-k+1\right) -k%
			\binom{m}{k}\Gamma \left( \theta +m-k\right) \right] x^{k-1}\psi _{k}\left(
			n+x\right) .
		\end{eqnarray*}%
		A simplication leads to%
		\begin{equation*}
			\Gamma \left( \theta +m\right) -m\Gamma \left( \theta +m-1\right) =\left(
			\theta -1\right) \Gamma \left( m+\theta -1\right) \geq 0,
		\end{equation*}%
		and%
		\begin{eqnarray*}
			&&\binom{m}{k-1}\Gamma \left( \theta +m-k+1\right) -k\binom{m}{k}\Gamma
			\left( \theta +m-k\right)  \\
			&=&\left( \theta -1\right) \binom{m}{k-1}\Gamma \left( \theta +m-k\right)
			\geq 0,
		\end{eqnarray*}%
		which implies that $r_{\theta ,n}^{\prime }\left( x\right) >0$ for $x>0$.
		Hence, $r_{\theta ,n}(x)>r_{\theta ,n}(0)=\Gamma (\theta +m)(\psi (n)+\gamma
		)>0$ for $x>0$, which results in $\left( -1\right) ^{m-1}\mathcal{\tilde{R}}%
		_{\theta ,n}^{\left( m\right) }\left( x\right) >0$ for all $m\in \mathbb{N}$
		and $x>0$, that is, $x\mapsto \mathcal{\tilde{R}}_{\theta ,n}^{\prime
		}\left( x\right) $ is completely monotonic on $\left( 0,\infty \right) $.
		Taking $\theta =n=1$ gives the increasing property of $\mathcal{\tilde{R}}%
		_{1,1}\left( x\right) $ with%
		\begin{equation*}
			\lim_{x\rightarrow 0^{+}}\mathcal{\tilde{R}}_{1,1}\left( x\right) =-\frac{\pi ^{2}}{%
				3}\text{ \ and \ }\lim_{x\rightarrow \infty }\mathcal{\tilde{R}}%
			_{1,1}\left( x\right) =0,
		\end{equation*}%
		which completes the proof.
	\end{proof}
	
	\section{Monotonicity property of $\mathcal{R}(x)/\mathcal{B}(x)$ and $%
		\left( \mathcal{B}(x)-\mathcal{R}\left( x\right) \right) /x^{2}$}\label{sec-4}
	
	In this section, we prove the monotonicity property of the functions%
	\begin{eqnarray}
		x &\mapsto &\frac{\mathcal{R}(x)}{\mathcal{B}(x)}=\frac{\Gamma \left(
			2x\right) }{\Gamma \left( x\right) ^{2}}\left( -2\psi \left( x\right)
		-2\gamma \right) ,  \label{R/B} \\
		x &\mapsto &D\left( x\right) =\frac{\mathcal{B}\left( x\right) -\mathcal{R}%
			\left( x\right) }{x^{2}}=\frac{1}{x^{2}}\left[ \frac{\Gamma \left( x\right)
			^{2}}{\Gamma \left( 2x\right) }-\left( -2\psi \left( x\right) -2\gamma
		\right) \right]  \label{D}
	\end{eqnarray}%
	on $\left( 0,\infty \right) $.
	
	\begin{theorem}
		\label{T-B/R-dcc}The function $x\mapsto \mathcal{R}(x)/\mathcal{B}(x)$
		strictly decreasing and concave on $\left( 0,\infty \right) $ with%
		\begin{equation*}
			\lim_{x\rightarrow 0+}\frac{\mathcal{R}(x)}{\mathcal{B}(x)}=1\text{ \ and \ }%
			\lim_{x\rightarrow \infty }\frac{\mathcal{R}(x)}{\mathcal{B}(x)}=-\infty .
		\end{equation*}%
		In particular, $x\mapsto \mathcal{R}(x)/\mathcal{B}(x)$\ is strictly
		decreasing and concave from $\left( 0,1\right) $\ onto $\left( 0,1\right) $.
	\end{theorem}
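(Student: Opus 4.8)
The plan is to differentiate $h(x):=\mathcal{R}(x)/\mathcal{B}(x)$ logarithmically and reduce the signs of $h'$ and $h''$ to the behaviour of one auxiliary function. First I would record the three boundary values. Evaluating directly gives $\mathcal{R}(1)=-2\psi(1)-2\gamma=0$ and $\mathcal{B}(1)=\Gamma(1)^{2}/\Gamma(2)=1$, so $h(1)=0$. Near the origin the expansions \eqref{R-ps} and \eqref{B-ps}--\eqref{ak} both begin with $2/x-(\pi^{2}/3)x+\cdots$, whence $h(0^{+})=1$; in fact they force $h(x)=1-\zeta(3)x^{3}+\cdots$, which already exhibits $h'(0^{+})=0$. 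Finally, as $x\to\infty$ the asymptotics \eqref{g,psi-af} give $\mathcal{R}(x)\sim-2\ln x\to-\infty$, while \eqref{g-df} together with the first series in \eqref{B-hs} gives $\mathcal{B}(x)\sim 2^{1-2x}/x\to0^{+}$; hence $h(x)\to-\infty$.

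For the monotonicity I would write $\ln\mathcal{B}(x)=2\ln\Gamma(x)-\ln\Gamma(2x)$, so that $\mathcal{B}'/\mathcal{B}=2\psi(x)-2\psi(2x)$, and use $\mathcal{R}'=-2\psi'(x)$, $\mathcal{R}=-2[\psi(x)+\gamma]$. A short computation then yields
\[
h'(x)=\frac{\mathcal{R}'\mathcal{B}-\mathcal{R}\mathcal{B}'}{\mathcal{B}^{2}}=-\frac{2Q(x)}{\mathcal{B}(x)},\qquad Q(x):=\psi'(x)+2\bigl[\psi(x)+\gamma\bigr]\bigl[\psi(2x)-\psi(x)\bigr].
\]
Since $\mathcal{B}>0$, the assertion $h'<0$ is \emph{exactly} $Q>0$ on $(0,\infty)$. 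Here the interval splits naturally at $x=1$, where $\psi(x)+\gamma$ changes sign (because $\psi(1)=-\gamma$ and $\psi$ is increasing). For $x\ge 1$ the three factors $\psi'(x)$, $\psi(x)+\gamma$, $\psi(2x)-\psi(x)$ are all nonnegative with $\psi'(x)>0$, so $Q>0$ is immediate.

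The substantive part is $Q>0$ on $(0,1)$, where $\psi(x)+\gamma<0$ makes the product term negative, so that $\psi'(x)$ must dominate. Writing $2[\psi(x)+\gamma]=-\mathcal{R}(x)$ turns this into $\psi'(x)>\mathcal{R}(x)\bigl[\psi(2x)-\psi(x)\bigr]$ with both sides positive. On $(0,1)$ I would pass to power series: expand $\psi'(x)=x^{-2}+\sum_{n\ge0}(-1)^{n}(n+1)\zeta(n+2)x^{n}$, use $\mathcal{R}(x)$ from \eqref{R-ps}, and obtain $\psi(2x)-\psi(x)$ from the same psi-expansion evaluated at $2x$ and $x$ (alternatively via the duplication formula \eqref{psi-df}). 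The leading $x^{-2}$ of $\psi'$ cancels the leading $x^{-2}$ of $\mathcal{R}(x)[\psi(2x)-\psi(x)]$, and one is reduced to showing that the remaining series (convergent on $(0,1)$) has nonnegative coefficients. This coefficient positivity — a $\zeta$-value inequality in the spirit of those underlying \eqref{lnB-ps} and \eqref{R-ps} — is the \textbf{main obstacle}; the L'Hospital rule of Lemma \ref{L-LMR} may alternatively be invoked to organise the quotient $h$ on $(0,1)$.

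For the concavity I would avoid differentiating $h$ twice and instead exploit $h'=-2Q/\mathcal{B}$: then $h''=-2(Q/\mathcal{B})'$, so $h''<0$ is equivalent to $Q/\mathcal{B}$ being increasing, i.e.\ to $Q'(x)>Q(x)(\ln\mathcal{B})'(x)=2Q(x)[\psi(x)-\psi(2x)]$. As $(\ln\mathcal{B})'<0$ and $Q>0$ (just established), the right-hand side is negative, so it suffices to prove $Q'\ge 0$, that is, that $Q$ is increasing. Differentiating,
\[
Q'(x)=\psi''(x)+2\psi'(x)\bigl[\psi(2x)-\psi(x)\bigr]+2\bigl[\psi(x)+\gamma\bigr]\bigl[2\psi'(2x)-\psi'(x)\bigr],
\]
and the duplication formula \eqref{pn-df} rewrites $2\psi'(2x)-\psi'(x)=\tfrac12[\psi'(x+\tfrac12)-\psi'(x)]<0$. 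I expect the positivity of $Q'$ — or, more robustly, of $Q'-Q(\ln\mathcal{B})'$, in which the extra positive term $-Q(\ln\mathcal{B})'$ grows and dominates for large $x$ — to follow for $x\ge1$ from $\psi'>0$ and elementary bounds, and to reduce on $(0,1)$ to a second coefficient-positivity statement of the same flavour as above. Once $h$ is shown strictly decreasing and concave on $(0,\infty)$, the ``in particular'' claim is immediate: on $(0,1)$ it decreases strictly from $h(0^{+})=1$ to $h(1)=0$ and, being continuous, maps $(0,1)$ bijectively onto $(0,1)$.
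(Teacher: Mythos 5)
Your reduction coincides with the paper's: your $Q$ is exactly the function $h_{2}$ of Lemma \ref{L-h2-pd}, and the identities $(\mathcal{R}/\mathcal{B})'=-2h_{2}/\mathcal{B}$ and $(\mathcal{R}/\mathcal{B})''=-2(h_{2}/\mathcal{B})'$, together with $(\ln \mathcal{B})'<0$, show that the whole theorem rests on the two facts $h_{2}>0$ and $h_{2}'\geq 0$ on $(0,\infty)$. Your limit computations and the easy half-line $x\geq 1$ are fine. The gap is that you leave both positivity statements on $(0,1)$ unproved --- you yourself call them the ``main obstacle'' --- and the route you sketch for them does not work. Using $\psi(x)+\gamma=-x^{-1}+\sum_{n\geq 1}(-1)^{n+1}\zeta(n+1)x^{n}$ and $\psi(2x)-\psi(x)=(2x)^{-1}+\sum_{n\geq1}(-1)^{n+1}(2^{n}-1)\zeta(n+1)x^{n}$ one finds
\[
Q(x)=\psi'(x)+2\bigl[\psi(x)+\gamma\bigr]\bigl[\psi(2x)-\psi(x)\bigr]=3\zeta(3)\,x+\bigl(2\zeta(2)^{2}-10\zeta(4)\bigr)x^{2}+\cdots=3\zeta(3)\,x-\tfrac{\pi^{4}}{18}\,x^{2}+\cdots,
\]
so the Taylor coefficients of your ``remaining series'' are \emph{not} all nonnegative --- the coefficient of $x^{2}$ is already negative --- and termwise positivity cannot yield $Q>0$. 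In addition, the expansion of $\psi(2x)$ about $0$ converges only for $x<1/2$, so a series argument at the origin could not cover $(0,1)$ in any case. The same objections apply to your proposed second coefficient analysis for $Q'$.

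The paper closes exactly this gap with Lemma \ref{L-h2-pd}, by a mechanism you would need to replicate: it proves the single global inequality $h_{2}''<0$ on $(0,\infty)$ by showing $h_{2}''(x+1)-h_{2}''(x)>0$ (controlled through the descending chain of auxiliary functions $h_{3},\dots ,h_{6}$ built from the recurrence \eqref{pn-rf}) together with $h_{2}''(x+n)\rightarrow 0$; then $h_{2}'>0$ follows from $h_{2}'(\infty)=0$, and $h_{2}>0$ from $h_{2}(0^{+})=0$. In other words, the concavity of $Q$ is what simultaneously delivers both $Q>0$ and $Q'>0$ on all of $(0,\infty)$, whereas your plan treats them as two separate coefficient-positivity problems, neither of which is true at the level of coefficients. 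As written, your proposal establishes the boundary limits and the range $x\geq 1$, but not the theorem.
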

	
	\begin{theorem}
		\label{T-B-R/xx-pd}The function $D$ defined by (\ref{D}) is decreasing from $%
		\left( 0,\infty \right) $ onto $\left( 0,2\zeta \left( 3\right) \right) $.
	\end{theorem}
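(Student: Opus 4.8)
The plan is to prove the two limits first and then establish strict monotonicity through the sign of a single auxiliary function. Write $f(x)=\mathcal{B}(x)-\mathcal{R}(x)$, so that $D=f/x^{2}$. Combining the power series of $\mathcal{B}$ from Theorem~\ref{T-B-hs} with that of $\mathcal{R}$ from Theorem~\ref{T-R-sc}, the terms $2/x$ and the linear terms cancel (since $a_{2}=-\zeta(2)$), and one finds $f(x)=2\zeta(3)x^{2}+O(x^{3})$ as $x\to0^{+}$; hence $\lim_{x\to0^{+}}D(x)=2\zeta(3)$, together with $f(0^{+})=0$ and $f'(0^{+})=0$. At the other end, the asymptotic formulas \eqref{g,psi-af} give $\mathcal{B}(x)\to0$ (in fact exponentially, $\mathcal{B}(x)\sim2\sqrt{\pi}\,x^{-1/2}4^{-x}$) and $\mathcal{R}(x)\sim-2\ln x$, so $f(x)\sim2\ln x$ and $\lim_{x\to\infty}D(x)=0$. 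Once $D$ is shown to be strictly decreasing, these two limits identify the range as $(0,2\zeta(3))$.

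Monotonicity is equivalent to $N(x)<0$ on $(0,\infty)$, where $N(x):=x^{3}D'(x)=xf'(x)-2f(x)$. Differentiation gives $N'(x)=xf''(x)-f'(x)$ and $N''(x)=xf'''(x)$, together with $N(0^{+})=N'(0^{+})=0$ (from $f=2\zeta(3)x^{2}+O(x^{3})$) and $\lim_{x\to\infty}N(x)=-\infty$ (from $N\sim2-4\ln x$). I would stress that a naive double application of the L'Hospital monotone rule (Lemma~\ref{L-LMR}) fails here, because $f'''=\mathcal{B}'''+2\psi'''$ is \emph{not} of one sign: near $0$ one has $f'''(0^{+})=-\pi^{4}/6<0$, while for large $x$ the polygamma term $2\psi'''(x)\sim4/x^{4}$ dominates the exponentially small $\mathcal{B}'''(x)$, forcing $f'''>0$.

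The heart of the proof is therefore to show that $f'''$ changes sign \emph{exactly once}, from negative to positive. For this I would produce a Laplace-type representation. Starting from the first series of \eqref{B-hs}, writing $1/(x+n)=\int_{0}^{\infty}e^{-(x+n)s}\,ds$ and summing $\sum_{n\ge0}W_{n}e^{-ns}=(1-e^{-s})^{-1/2}$, the substitution $u=s+\ln4$ yields $\mathcal{B}(x)=2\int_{\ln4}^{\infty}e^{-xu}(1-4e^{-u})^{-1/2}\,du$; combined with $\psi'''(x)=\int_{0}^{\infty}t^{3}e^{-xt}(1-e^{-t})^{-1}\,dt$, which follows from \eqref{psi-sr}, this gives
\begin{equation*}
	f'''(x)=2\int_{0}^{\infty}t^{3}e^{-xt}\,w(t)\,dt,
\end{equation*}
where $w(t)=1/(1-e^{-t})$ for $0<t\le\ln4$ and $w(t)=1/(1-e^{-t})-1/\sqrt{1-4e^{-t}}$ for $t>\ln4$. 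An elementary computation, squaring $\sqrt{1-4e^{-t}}<1-e^{-t}$ (which reduces to the always-true $2e^{-t}+e^{-2t}>0$), shows $w>0$ on $(0,\ln4)$ and $w<0$ on $(\ln4,\infty)$, so the density $t^{3}w(t)$ changes sign exactly once. The kernel $e^{-xt}$ then forces at most one sign change of $f'''$: if $f'''(x_{1})=0$, factor $e^{-xt}=e^{-(x-x_{1})t}e^{-x_{1}t}$ and compare $e^{-(x-x_{1})t}$ with its value at the crossing point $\ln4$; the integrand of $f'''(x)-e^{-(x-x_{1})\ln4}f'''(x_{1})$ is then nonnegative for $x>x_{1}$ and nonpositive for $x<x_{1}$, so $f'''$ passes from $-$ to $+$, consistently with $f'''(0^{+})<0$ and $f'''(+\infty)=0^{+}$.

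With the single sign change of $f'''$ (say at $x_{0}$) in hand, $N''=xf'''$ is negative on $(0,x_{0})$ and positive on $(x_{0},\infty)$, so $N$ is concave then convex. On $(0,x_{0}]$, concavity with $N(0^{+})=N'(0^{+})=0$ forces $N'<0$ and hence $N<0$. On $(x_{0},\infty)$, convexity makes $N'$ increasing; were $N'\ge0$ somewhere it would stay $\ge0$ and render $N$ nondecreasing, contradicting $N(x)\to-\infty$, so $N'<0$ there too and $N<N(x_{0})<0$. Thus $N<0$ throughout, i.e. $D'<0$, and $D$ is strictly decreasing from $(0,\infty)$ onto $(0,2\zeta(3))$. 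The main obstacle is precisely the single-sign-change claim for $f'''$; everything else is bookkeeping with the series, the asymptotics, and the concave/convex shape of $N$.
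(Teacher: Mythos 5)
Your proof is correct, but it takes a genuinely different route from the paper. The paper writes $D=f/g$ with $f(x)=1-\mathcal{R}(x)\Gamma(2x)/\Gamma(x)^{2}$ and $g(x)=x^{2}\Gamma(2x)/\Gamma(x)^{2}$ (both vanishing at $0^{+}$), computes $f'/g'=h_{2}(x)/\bigl(xh_{1}(x)\bigr)$, and applies the L'Hospital monotone rule (Lemma \ref{L-LMR}) twice; the real work is Lemma \ref{L-h2-pd}, where $h_{2}''<0$ is proved through a long telescoping chain $h_{3},\dots,h_{6}$ of polygamma identities. You instead reduce everything to the sign of $N=x^{3}D'$ and to the claim that $f'''=\mathcal{B}'''+2\psi'''$ changes sign exactly once, which you obtain from the Laplace-type representation $\mathcal{B}(x)=2\int_{\ln 4}^{\infty}e^{-xu}(1-4e^{-u})^{-1/2}\,du$ (correctly derived from the first series in \eqref{B-hs} by summing $\sum_{n}W_{n}e^{-ns}=(1-e^{-s})^{-1/2}$) together with the variation-diminishing property of the Laplace transform, which you prove inline; the density $t^{3}w(t)$ crosses zero precisely at $t=\ln 4$, and your factorization argument at a hypothetical zero $x_{1}$ of $f'''$ is sound. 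The subsequent concave/convex analysis of $N$, with $N(0^{+})=N'(0^{+})=0$ and $N\to-\infty$, is also correct, and unlike the paper's proof you make the endpoint limits $2\zeta(3)$ and $0$ explicit. Your observation that a naive iterated application of Lemma \ref{L-LMR} to $(\mathcal{B}-\mathcal{R})/x^{2}$ fails because $f'''$ is not of one sign is a fair diagnosis of why the paper needs its particular decomposition. What each approach buys: the paper stays entirely within elementary recurrence manipulations of polygamma functions at the cost of the heavy Lemma \ref{L-h2-pd}; your argument is shorter and more structural once the integral representation is in hand, and it pinpoints where the "densities" of $\mathcal{B}$ and $-2\psi$ cross. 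Two trivial slips that do not affect the argument: $2\psi'''(x)\sim 4/x^{3}$, not $4/x^{4}$; and you should say a word about why $|\mathcal{B}'''(x)|=O(4^{-x})$ (bound $e^{-xu}\le e^{-(x-1)u}e^{-u}$ on $u\ge\ln 4$), though this is routine.
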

	
	In order to prove Theorems \ref{T-B/R-dcc} and \ref{T-B-R/xx-pd}, we need
	the following two lemmas.
	
	\begin{lemma}
		\label{L-h1-pd}Let 
		\begin{equation}
			h_{1}\left(x\right)=x\left(\psi\left(2x\right)-\psi\left( x\right)\right)+1.
			\label{h1}
		\end{equation}%
		Then $h_{1}\left( x\right) $ is positive and increasing on $\left( 0,\infty
		\right) $.
	\end{lemma}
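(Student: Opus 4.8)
The plan is to treat the two assertions separately. \emph{Positivity} is essentially free: formula \eqref{psi-sr} with $n=1$ gives $\psi'(x)=\sum_{k=0}^{\infty}(x+k)^{-2}>0$, so $\psi$ is strictly increasing on $(0,\infty)$; since $2x>x$ for $x>0$, we get $\psi(2x)-\psi(x)>0$ and hence $h_{1}(x)=x(\psi(2x)-\psi(x))+1>1>0$. Thus the real content of the lemma is the monotonicity, which I would obtain by showing $h_{1}'(x)>0$ on $(0,\infty)$.

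For the monotonicity, I would first differentiate,
\[
h_{1}'(x)=\bigl(\psi(2x)-\psi(x)\bigr)+x\bigl(2\psi'(2x)-\psi'(x)\bigr),
\]
and then substitute the series coming from \eqref{psi-sr}: on the one hand $\psi(2x)-\psi(x)=\sum_{n=0}^{\infty}\bigl(\tfrac{1}{n+x}-\tfrac{1}{n+2x}\bigr)$, and on the other $\psi'(x)=\sum_{n=0}^{\infty}(n+x)^{-2}$, $\psi'(2x)=\sum_{n=0}^{\infty}(n+2x)^{-2}$. Collecting the $n$-th terms into a single series, the first-order pieces combine as $\tfrac{1}{n+x}-\tfrac{x}{(n+x)^{2}}=\tfrac{n}{(n+x)^{2}}$ and $\tfrac{2x}{(n+2x)^{2}}-\tfrac{1}{n+2x}=-\tfrac{n}{(n+2x)^{2}}$, so that
\[
h_{1}'(x)=\sum_{n=1}^{\infty}n\left[\frac{1}{(n+x)^{2}}-\frac{1}{(n+2x)^{2}}\right].
\]
Since $0<n+x<n+2x$ for every $n\ge 1$ and $x>0$, each summand is strictly positive, whence $h_{1}'(x)>0$ and $h_{1}$ is increasing.

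The one point requiring care — and the main obstacle — is the regrouping: the individual series $\sum 1/(n+x)$ diverge, so the rearrangement into $\sum_{n\ge 1}n[(n+x)^{-2}-(n+2x)^{-2}]$ must be carried out by grouping the three sums term-by-term \emph{before} summing. This is legitimate because, after grouping, the $n$-th term equals $n\,x(2n+3x)/[(n+x)^{2}(n+2x)^{2}]=O(1/n^{2})$, so the regrouped series converges absolutely and represents $h_{1}'$. I would note in passing that a shorter but less self-contained route is available: combining the duplication formula with the computations in the proof of Theorem \ref{T-RlnxB-sc} gives $\psi(2x)-\psi(x)=\tfrac{1}{2x}-\tfrac{1}{2}\mathcal{B}_{\ell}'(x)$, hence $h_{1}(x)=\tfrac{3}{2}-\tfrac{x}{2}\mathcal{B}_{\ell}'(x)$ and $h_{1}'(x)=-\tfrac{1}{2}\mathcal{B}_{\ell}'(x)-\tfrac{x}{2}\mathcal{B}_{\ell}''(x)$, where both terms are positive because Theorem \ref{T-RlnxB-sc}(i) yields $\mathcal{B}_{\ell}'<0$ and $\mathcal{B}_{\ell}''<0$ on $(0,\infty)$.
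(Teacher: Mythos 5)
Your argument is correct, and it reaches the conclusion by a genuinely different route from the paper. The paper first rewrites $h_{1}$ via the duplication formula \eqref{psi-df} and the recurrence \eqref{pn-rf} as $h_{1}(x)=\tfrac{3}{2}+x\ln 2-\tfrac{x}{2}\bigl[\psi(x+1)-\psi(x+\tfrac12)\bigr]$, then uses the integral representation \eqref{psi-ir} and an integration by parts to get $h_{1}(x)=\tfrac{5}{4}+x\ln 2+\tfrac14\int_{0}^{\infty}\frac{e^{t/2}}{(1+e^{t/2})^{2}}e^{-xt}\,dt$; from this it reads off $h_{1}''>0$, hence $h_{1}'(x)>h_{1}'(0^{+})=\ln 2-\tfrac12[\psi(1)-\psi(\tfrac12)]=0$, and finally $h_{1}(x)>h_{1}(0^{+})=\tfrac32$. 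Your proof replaces the Laplace-transform machinery by the partial-fraction series \eqref{psi-sr}: positivity is immediate from $\psi'>0$ (the paper instead obtains it as a byproduct of monotonicity, with the sharper bound $h_{1}>3/2$ rather than your $h_{1}>1$, though only positivity is used later), and monotonicity follows from the explicit identity $h_{1}'(x)=\sum_{n\ge 1}n\bigl[(n+x)^{-2}-(n+2x)^{-2}\bigr]$, whose terms are manifestly positive. Your justification of the regrouping is adequate: one adds the convergent series $\sum_{n}\bigl(\tfrac{1}{n+x}-\tfrac{1}{n+2x}\bigr)$ for $\psi(2x)-\psi(x)$ to the absolutely convergent series for $x(2\psi'(2x)-\psi'(x))$ term by term, and the combined $n$-th term is $O(n^{-2})$. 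Your series route is more elementary and yields a closed form for $h_{1}'$; the paper's route needs no convergence discussion and produces the sharp limits $h_{1}(0^{+})=\tfrac32$ and $h_{1}'(0^{+})=0$ as a bonus. Your closing observation that $h_{1}(x)=\tfrac32-\tfrac{x}{2}\mathcal{B}_{\ell}'(x)$, so that Theorem \ref{T-RlnxB-sc}(i) already implies the lemma, is also correct and is in fact the same integral-representation computation the paper repeats inside this proof.
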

	
	\begin{proof}
		Using the duplication formula (\ref{psi-df}) and integral representation (%
		\ref{psi-ir}) then integration by parts gives 
		\begin{align*}
			h_{1}(x)& =\frac{3}{2}+x\ln 2-\frac{1}{2}x\left[ \psi \left( x+1\right)
			-\psi \left( x+\frac{1}{2}\right) \right]  \\
			& =\frac{3}{2}+x\ln 2-\frac{1}{2}x\int_{0}^{\infty }\frac{e^{-t/2}-e^{-t}}{%
				1-e^{-t}}e^{-xt}dt \\
			& =\frac{3}{2}+x\ln 2+\frac{1}{2}\int_{0}^{\infty }\frac{1}{e^{t/2}+1}%
			de^{-xt} \\
			& =\frac{3}{2}+x\ln 2-\frac{1}{4}+\frac{1}{4}\int_{0}^{\infty }\frac{e^{t/2}%
			}{\left( 1+e^{t/2}\right) ^{2}}e^{-xt}dt.
		\end{align*}%
		Then%
		\begin{eqnarray*}
			h_{1}^{\prime }(x) &=&\ln 2-\frac{1}{4}\int_{0}^{\infty }\frac{te^{t/2}}{%
				\left( 1+e^{t/2}\right) ^{2}}e^{-xt}dt, \\
			h_{1}^{\prime \prime }(x) &=&\frac{1}{4}\int_{0}^{\infty }\frac{t^{2}e^{t/2}%
			}{\left( 1+e^{t/2}\right) ^{2}}e^{-xt}dt>0\text{ for }x>0\text{.}
		\end{eqnarray*}%
		It follows that 
		\begin{equation*}
			h_{1}^{\prime }(x)>\lim_{x\rightarrow 0^{+}}h_{1}^{\prime }(0)=\ln 2-\frac{1%
			}{2}\left[ \psi \left( 1\right) -\psi \left( \frac{1}{2}\right) \right] =0
		\end{equation*}%
		for $x>0$, which implies that 
		\begin{equation*}
			h_{1}\left( x\right) >\lim_{x\rightarrow 0^{+}}h_{1}\left( x\right) =\frac{3%
			}{2}>0
		\end{equation*}%
		for $x>0$. This completes the proof.
	\end{proof}
	
	\begin{lemma}
		\label{L-h2-pd}Let%
		\begin{equation}
			h_{2}\left( x\right) =2\left( \psi \left( x\right) +\gamma \right) \left(
			\psi \left( 2x\right) -\psi \left( x\right) \right) +\psi ^{\prime }\left(
			x\right) .  \label{h2}
		\end{equation}%
		Then $h_{2}^{\prime \prime }\left( x\right) <0$ for $x>0$. Consequently, $%
		h_{2}\left( x\right) ,h_{2}^{\prime }\left( x\right) >0$ for $x>0$.
	\end{lemma}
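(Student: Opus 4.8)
The statement has two layers: the genuine analytic content is the second‑order sign $h_2''<0$, and the positivity of $h_2$ and $h_2'$ is then a soft consequence. I would dispose of the soft part first. If $h_2''<0$ on $(0,\infty)$, then $h_2'$ is strictly decreasing, so it suffices to check $\lim_{x\to\infty}h_2'(x)=0$ to get $h_2'>0$ (a decreasing function tending to $0$ at $+\infty$ stays positive); then $h_2$ is strictly increasing, and $\lim_{x\to0^+}h_2(x)=0$ forces $h_2>0$. Both limits are routine: using $\psi(x)+\gamma=-1/x+\zeta(2)x-\zeta(3)x^2+\cdots$ from Theorem \ref{T-R-sc}, the principal $x^{-2}$ parts of $2(\psi(x)+\gamma)(\psi(2x)-\psi(x))$ and of $\psi'(x)$ cancel, leaving $h_2(x)=3\zeta(3)x+O(x^2)$, so $h_2(0^+)=0$; and the asymptotics \eqref{g,psi-af}--\eqref{pg-af} give $h_2'(x)\to0$ as $x\to\infty$.

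The heart is $h_2''<0$. Writing $\phi(x)=\psi(x)+\gamma$ and $q(x)=\psi(2x)-\psi(x)$, we have $h_2=2\phi q+\psi'$, hence
\[ h_2''=2(\phi q)''+\psi'''=2\psi''q+4\psi'q'+2\phi q''+\psi'''. \]
The duplication formula \eqref{psi-df} together with the identity $\psi(x+1)-\psi(x+\tfrac12)=\int_0^\infty e^{-xt}/(e^{t/2}+1)\,dt$ (already used in the proof of Lemma \ref{L-h1-pd}) yields the clean representations
\[ q'(x)=-\frac12\int_0^\infty\frac{t\,e^{-xt}}{1+e^{-t/2}}\,dt<0,\qquad q''(x)=\frac12\int_0^\infty\frac{t^2e^{-xt}}{1+e^{-t/2}}\,dt>0, \]
and $q(x)=\ln2+\int_0^\infty e^{-xt}/[2(1+e^{-t/2})]\,dt$. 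Since $\psi''<0,\ q>0$ and $\psi'>0,\ q'<0$, the terms $2\psi''q$ and $4\psi'q'$ are negative with no further effort. The obstruction is the remaining $2\phi q''+\psi'''$: because $\phi$ vanishes at $x=1$ and is positive for $x>1$, both $2\phi q''$ (for $x>1$) and $\psi'''$ are positive there, so one must show the two negative terms dominate. The hard part will be that this domination is tight near $x=1$ (where $\phi(1)=0$), so crude bounds fail.

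To control this I would represent $h_2''$ as a single Laplace integral. With $m(t)=1/(1-e^{-t})$ and $n(t)=1/[2(1+e^{-t/2})]$ one has $\psi^{(k)}(x)=(-1)^{k-1}\int_0^\infty t^ke^{-xt}m(t)\,dt$ and $q^{(k)}(x)=(-1)^k\int_0^\infty t^ke^{-xt}n(t)\,dt$ for $k\ge1$, while $\phi(x)=\int_0^\infty(e^{-t}-e^{-xt})m(t)\,dt$. Converting each of the three products into a convolution, the product densities combine through the algebraic identity $t^2+2ts+s^2=(t+s)^2$ and collapse to $-2\tau^2\int_0^\tau m(t)n(\tau-t)\,dt$; this is divergent at $t=0$, but the divergence is exactly cancelled by the $e^{-t}$‑part arising from the difference form of $\phi$, leaving a convergent kernel. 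Adding the two single‑integral densities $-2\ln2\,\tau^2m(\tau)$ (from the constant $\ln2$ in $q$) and $+\tau^3m(\tau)$ (from $\psi'''$) gives $h_2''(x)=\int_0^\infty\Phi(\tau)e^{-x\tau}\,d\tau$ with an explicit density $\Phi$. The plan is then to prove $\int_0^\infty\Phi(\tau)e^{-x\tau}\,d\tau<0$ for all $x>0$: one verifies $\Phi(\tau)\sim-2\ln2\,\tau<0$ as $\tau\to0$ (matching $h_2''\sim-2\ln2/x^2$), that the leading growth of $\Phi$ as $\tau\to\infty$ cancels (consistent with the finite value $h_2''(0^+)=-10\zeta(4)$), and then either establishes $\Phi\le0$ pointwise or, if $\Phi$ changes sign, estimates the combined integrand directly. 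This final sign verification, delicate precisely because of the near‑cancellations around $x=1$, is where the genuine work lies.
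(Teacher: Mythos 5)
Your reduction of the positivity claims to the single inequality $h_2''<0$ is correct and matches the paper: $h_2''<0$ makes $h_2'$ decreasing with $h_2'(\infty)=0$, hence $h_2'>0$, and then $h_2$ increasing with $h_2(0^+)=0$ gives $h_2>0$; your expansion $h_2(x)=3\zeta(3)x+O(x^2)$ and the value $h_2''(0^+)=-10\zeta(4)$ are also correct. The problem is that the core assertion $h_2''<0$ is never actually proved. After the (correct) observation that $2\psi''q+4\psi'q'<0$ while $2\phi q''+\psi'''>0$ for $x>1$, you propose to assemble $h_2''$ into a single Laplace transform $\int_0^\infty\Phi(\tau)e^{-x\tau}\,d\tau$ and then ``either establish $\Phi\le 0$ pointwise or, if $\Phi$ changes sign, estimate the combined integrand directly.'' That disjunction is precisely the theorem; you explicitly label it as ``where the genuine work lies'' and stop there. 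Nothing in the proposal shows that $\Phi\le 0$, nor is an alternative estimate carried out, and your own consistency checks (cancellation of the leading $\tau^3$ growth of $\Phi$, the near-cancellation at $x=1$) indicate that the sign of $\Phi$ is delicate rather than evident. There is also a technical point left implicit: $\phi=\psi+\gamma$ is not itself a Laplace transform (it grows like $\ln x$), so the regularization of the divergent convolution $-2\tau^2\int_0^\tau m(t)n(\tau-t)\,dt$ against the $e^{-t}$ part of $\phi$ must be written out before one even has a well-defined $\Phi$ to examine; this is doable but is additional unexecuted work.

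For comparison, the paper avoids the Laplace-density route entirely. It uses the duplication and recurrence formulas to compute the increment $h_2''(x+1)-h_2''(x)$ in closed form, sets $h_3(x)=x^3\bigl[h_2''(x+1)-h_2''(x)\bigr]$, and shows $h_3>0$ by a descending chain of auxiliary functions $h_4,h_5,h_6$, the last of which is handled by an explicit rational-function inequality and a telescoping argument $h_6(x)<h_6(x+1)<\cdots\to 0$. Telescoping then gives $h_2''(x)<h_2''(x+1)<\cdots\to 0$. If you want to complete your program, you would need either a pointwise proof that $\Phi\le0$ (after the regularization) or some comparable finite verification; as written, the proposal is a plausible plan with the decisive inequality missing.
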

	
	\begin{proof}
		Differentiation yields 
		\begin{align*}
			h_{2}^{\prime }\left( x\right)=&2\left( \psi \left( x\right) +\gamma \right)
			\left( 2\psi ^{\prime }\left( 2x\right) -\psi ^{\prime }\left( x\right)
			\right) +2\psi ^{\prime }\left( x\right) \left( \psi \left( 2x\right) -\psi
			\left( x\right) \right) +\psi ^{\prime \prime }\left( x\right) , \\
			h_{2}^{\prime \prime }\left( x\right)=&2\left( \psi \left( x\right) +\gamma
			\right) \left( 4\psi ^{\prime \prime }\left( 2x\right) -\psi ^{\prime \prime
			}\left( x\right) \right) +4\psi ^{\prime }\left( x\right) \left( 2\psi
			^{\prime }\left( 2x\right) -\psi ^{\prime }\left( x\right) \right) \\
			&+2\psi ^{\prime \prime }\left( x\right) \left( \psi \left( 2x\right) -\psi
			\left( x\right) \right) +\psi ^{\prime \prime \prime }\left( x\right) .
		\end{align*}%
		Denote by 
		\begin{equation*}
			P_{0}=\psi \left( x+1\right) +\gamma ,\ P_{1}=\psi ^{\prime }\left(
			x+1\right) ,P_{2}=\psi ^{\prime \prime }\left( x+1\right) ,
		\end{equation*}%
		and%
		\begin{equation*}
			Q_{0}=\frac{1}{2}\psi \left( x+1\right) -\frac{1}{2}\psi \left( x+\frac{1}{2}%
			\right) ,\quad Q_{1}=\frac{1}{2}\psi ^{\prime }\left( x+1\right) -\frac{1}{2}%
			\psi ^{\prime }\left( x+\frac{1}{2}\right) ,
		\end{equation*}%
		\begin{equation*}
			Q_{2}=\frac{1}{2}\psi ^{\prime \prime }\left( x+1\right) -\frac{1}{2}\psi
			^{\prime \prime }\left( x+\frac{1}{2}\right) .
		\end{equation*}%
		Then combining (\ref{psi-df}) and (\ref{pn-df}), we can rewrite $%
		h_{2}^{\prime \prime }(x)$ as 
		\begin{align*}
			h_{2}^{\prime \prime }\left( x\right) & =2\left( P_{0}-\frac{1}{x}\right)
			\left( \frac{1}{x^{3}}-Q_{2}\right) +2\left( P_{2}-\frac{2}{x^{3}}\right)
			\left( \ln 2+\frac{1}{2x}-Q_{0}\right) \\
			& +4\left( P_{1}+\frac{1}{x^{2}}\right) \left( -Q_{1}-\frac{1}{2x^{2}}%
			\right) +\psi ^{\prime \prime \prime }\left( x\right) .
		\end{align*}
		Using the recurrence formula (\ref{pn-rf}), we obtain 
		\begin{align*}
			h_{2}^{\prime \prime }\left( x+1\right) & =2P_{0}\left( \frac{8}{\left(
				2x+1\right) ^{3}}-Q_{2}\right) +2P_{2}\left( \ln 2-Q_{0}+\frac{1}{2x+1}%
			\right) \\
			& +4P_{1}\left( -Q_{1}-\frac{2}{\left( 2x+1\right) ^{2}}\right) +\psi
			^{\prime \prime \prime }\left( x+1\right)
		\end{align*}%
		and then, 
		\begin{align*}
			h_{2}^{\prime \prime }\left( x+1\right) -h_{2}^{\prime \prime }\left(
			x\right) & =\frac{16}{\left( 2x+1\right) ^{3}}P_{0}-\frac{2}{x^{3}}P_{0}-%
			\frac{2}{x}Q_{2}+\frac{2}{x^{4}} \\
			& +\frac{4}{x^{3}}\ln 2+\frac{2}{2x+1}P_{2}-\frac{1}{x}P_{2}-\frac{4}{x^{3}}%
			Q_{0}+\frac{2}{x^{4}} \\
			& +\frac{2}{x^{2}}P_{1}-\frac{8}{4x^{2}+4x+1}P_{1}+\frac{4}{x^{2}}Q_{1}+%
			\frac{2}{x^{4}}-\frac{6}{x^{4}},
		\end{align*}%
		which, by those notations $P_{i}$ and $Q_{i}$ for $i=0,1,2$, can be arranged
		as 
		\begin{align*}
			h_{2}^{\prime \prime }\left( x+1\right) -h_{2}^{\prime \prime }\left(
			x\right) & =\dfrac{2}{x^{3}}\left( \psi \left( x+\dfrac{1}{2}\right) +\gamma
			+2\ln 2\right) -\dfrac{2}{x^{2}}\psi ^{\prime }\left( x+\dfrac{1}{2}\right)
			\bigskip \\
			& +\dfrac{1}{x}\psi ^{\prime \prime }\left( x+\dfrac{1}{2}\right) -\dfrac{%
				4(4x^{3}+12x^{2}+6x+1)}{x^{3}\left( 2x+1\right) ^{3}}\left( \psi \left(
			x+1\right) +\gamma \right) \bigskip \\
			& +\dfrac{4(2x^{2}+4x+1)}{x^{2}\left( 2x+1\right) ^{2}}\psi ^{\prime }\left(
			x+1\right) -\dfrac{2(x+1)}{x\left( 2x+1\right) }\psi ^{\prime \prime }\left(
			x+1\right) .
		\end{align*}
		
		Now let 
		\begin{align}
			h_{3}\left( x\right) & =x^{3}\left[ h_{2}^{\prime \prime }\left( x+1\right)
			-h_{2}^{\prime \prime }\left( x\right) \right]   \label{h3} \\
			& =2\left[ \psi \left( x+\frac{1}{2}\right) +\gamma +2\ln 2\right] -2x\psi
			^{\prime }\left( x+\frac{1}{2}\right) +x^{2}\psi ^{\prime \prime }\left( x+%
			\frac{1}{2}\right)   \notag \\
			& -\frac{4\left( 4x^{3}+12x^{2}+6x+1\right) }{\left( 2x+1\right) ^{3}}\left(
			\psi \left( x+1\right) +\gamma \right)   \notag \\
			& +\frac{4x\left( 2x^{2}+4x+1\right) }{\left( 2x+1\right) ^{2}}\psi ^{\prime
			}\left( x+1\right) -\frac{2x^{2}\left( x+1\right) }{2x+1}\psi ^{\prime
				\prime }\left( x+1\right) .  \notag
		\end{align}%
		Then%
		\begin{equation*}
			h_{3}\left( 0^{+}\right) =\lim_{x\rightarrow 0^{+}}h_{3}\left( x\right)
			=0,\quad h_{3}\left( \infty \right) =\lim_{x\rightarrow \infty }h_{3}\left(
			x\right) =4\ln 2,
		\end{equation*}%
		and by differentiation%
		\begin{align}
			h_{3}^{\prime }\left( x\right) =& x^{2}\psi ^{\prime \prime \prime }\left( x+%
			\frac{1}{2}\right) +\frac{48x^{2}}{\left( 2x+1\right) ^{4}}\left( \psi
			\left( x+1\right) +\gamma \right)   \label{h4} \\
			& -\frac{24x^{2}}{\left( 2x+1\right) ^{3}}\psi ^{\prime }\left( x+1\right) +%
			\frac{6x^{2}}{\left( 2x+1\right) ^{2}}\psi ^{\prime \prime }\left(
			x+1\right)   \notag \\
			& -\frac{2x^{2}\left( x+1\right) }{\left( 2x+1\right) }\psi ^{\prime \prime
				\prime }\left( x+1\right)   \notag \\
			:=& \frac{x^{2}}{\left( 2x+1\right) ^{4}}h_{4}\left( x\right) ,  \notag
		\end{align}%
		where 
		\begin{align}
			h_{4}\left( x\right) =& \left( 2x+1\right) ^{4}\psi ^{\prime \prime \prime
			}\left( x+\frac{1}{2}\right) +48\left( \psi \left( x+1\right) +\gamma
			\right)   \notag \\
			& -24\left( 2x+1\right) \psi ^{\prime }\left( x+1\right) +6\left(
			2x+1\right) ^{2}\psi ^{\prime \prime }\left( x+1\right)   \notag \\
			& -2\left( x+1\right) \left( 2x+1\right) ^{3}\psi ^{\prime \prime \prime
			}\left( x+1\right) ,  \notag \\
			h_{4}\left( 0^{+}\right) =& \frac{13}{15}\pi ^{4}-4\pi ^{2}-12\zeta \left(
			3\right) =30.518\cdots ,\quad h_{4}\left( \infty \right) =\infty ,  \notag
		\end{align}%
		\begin{align}
			\dfrac{1}{\left( 2x+1\right) ^{4}}h_{4}^{\prime }\left( x\right) =& \dfrac{8%
			}{2x+1}\psi ^{\prime \prime \prime }\left( x+\dfrac{1}{2}\right) +\psi
			^{\left( 4\right) }\left( x+\dfrac{1}{2}\right)   \label{h5} \\
			& -\dfrac{8}{2x+1}\psi ^{\prime \prime \prime }\left( x+1\right) -\dfrac{%
				2(x+1)}{\left( 2x+1\right) }\psi ^{\left( 4\right) }\left( x+1\right)  
			\notag \\
			:=& h_{5}\left( x\right) ,  \notag
		\end{align}%
		\begin{equation*}
			h_{5}\left( 0^{+}\right) =\frac{112}{15}\pi ^{4}-696\zeta \left( 5\right)
			=5.619\cdots >0,\quad h_{5}\left( \infty \right) =0,
		\end{equation*}%
		\begin{align}
			& \frac{1}{2}\left( 2x+3\right) \left( 2x+1\right) \left[ h_{5}\left(
			x+1\right) -h_{5}\left( x\right) \right]   \label{h6} \\
			& =8\psi ^{\prime \prime \prime }\left( x+1\right) +\psi ^{\left( 4\right)
			}\left( x+1\right) -8\psi ^{\prime \prime \prime }\left( x+\frac{3}{2}%
			\right) -\frac{24(2x+1)}{\left( x+1\right) ^{5}}  \notag \\
			& :=h_{6}\left( x\right) ,  \notag
		\end{align}%
		and 
		\begin{equation}
			h_{6}\left( x+1\right) -h_{6}\left( x\right) =\frac{%
				24(80x^{4}+560x^{3}+1480x^{2}+1750x+781)}{\left( 2x+3\right) ^{4}\left(
				x+2\right) ^{5}}>0  \label{h6d}
		\end{equation}%
		for $x>0$.
		
		Equation (\ref{h6d}) implies that 
		\begin{equation*}
			h_{6}\left( x\right) <h_{6}\left( x+1\right) <\cdot \cdot \cdot
			<\lim_{n\rightarrow \infty }h_{6}\left( x+n\right) =0
		\end{equation*}%
		for $x>0$. By (\ref{h6}), it yields that $h_{5}\left( x+1\right)
		-h_{5}\left( x\right) <0$ for $x>0$, and so 
		\begin{equation*}
			h_{5}\left( x\right) >h_{5}\left( x+1\right) >\cdot \cdot \cdot
			>\lim_{n\rightarrow \infty }h_{5}\left( x+n\right) =0
		\end{equation*}%
		for $x>0$. This in combination with (\ref{h5}) leads to that $h_{4}^{\prime
		}\left( x\right) >0$ for $x>0$, and therefore, 
		\begin{equation*}
			h_{4}\left( x\right) >h_{4}\left( 0^{+}\right) =\frac{13}{15}\pi ^{4}-4\pi
			^{2}-12\zeta \left( 3\right) >0\text{ \ for }x>0.
		\end{equation*}%
		From (\ref{h4}) it is seen that $h_{3}^{\prime }\left( x\right) >0$ for $x>0$%
		, which together with $h_{3}\left( 0^{+}\right) =0$ indicates that $%
		h_{3}\left( x\right) >0$ for $x>0$. By the relation (\ref{h3}), it is found
		that $h_{2}^{\prime \prime }\left( x+1\right) -h_{2}^{\prime \prime }\left(
		x\right) >0$ for $x>0$, and it follows that 
		\begin{equation*}
			h_{2}^{\prime \prime }\left( x\right) <h_{2}^{\prime \prime }\left(
			x+1\right) <\cdot \cdot \cdot <\lim_{n\rightarrow \infty }h_{2}^{\prime
				\prime }\left( x+n\right) =0
		\end{equation*}%
		for $x>0$. This yields%
		\begin{equation*}
			h_{2}^{\prime }\left( x\right) >\lim_{x\rightarrow \infty }h_{2}^{\prime
			}\left( x\right) =0\text{ \ for }x>0\text{,}
		\end{equation*}%
		which, together with 
		\begin{align*}
		  {h_{2}\left( 0^+\right)} & =\lim_{x\rightarrow 0^{+}}
			\left[ 2\left( \psi (x+1)+\gamma -\frac{1}{x}\right) \left( \psi (2x+1)-\psi
			(x+1)+\frac{1}{2x}\right) +\psi ^{\prime }(x+1)+\frac{1}{x^{2}}\right]  \\
			& =\lim_{x\rightarrow 0^{+}}\left[ \psi ^{\prime }(x+1)+\frac{\psi
				(x+1)+\gamma }{x}+\frac{2\left( \psi (2x+1)-\psi (x+1)\right) }{x}\right]  \\
			& =\psi ^{\prime }(1)+\psi ^{\prime }(1)-2\psi ^{\prime }(1)=0,
		\end{align*}%
		in turn leads to $h_{2}\left( x\right) >0$ for $x>0$, thereby completing the
		proof.
	\end{proof}
	
	We are now in a position to prove Theorem \ref{T-B/R-dcc}.
	
	\begin{proof}[Proof of Theorem \protect\ref{T-B/R-dcc}]
		Differentiation yields%
		\begin{align*}
			\left[ \frac{\mathcal{R}(x)}{\mathcal{B}(x)}\right] ^{\prime }=& -2\frac{%
				\Gamma \left( 2x\right) }{\Gamma \left( x\right) ^{2}}h_{2}\left( x\right) ,
			\\
			\left[ \frac{\mathcal{R}(x)}{\mathcal{B}(x)}\right] ^{\prime \prime }=& -2%
			\left[ \frac{\Gamma \left( 2x\right) }{\Gamma \left( x\right) ^{2}}\right]
			^{\prime }h_{2}\left( x\right) -2\frac{\Gamma \left( 2x\right) }{\Gamma
				\left( x\right) ^{2}}h_{2}^{\prime }\left( x\right) , \\
			\left[ \frac{\Gamma \left( 2x\right) }{\Gamma \left( x\right) ^{2}}\right]
			^{\prime }=& 2\frac{\Gamma \left( 2x\right) }{\Gamma \left( x\right) ^{2}}%
			\left( \psi \left( 2x\right) -\psi \left( x\right) \right) >0\text{ for }x>0,
		\end{align*}%
		where $h_{2}\left( x\right) $ is defined by (\ref{h2}). By Lemma \ref%
		{L-h2-pd}, $h_{2}\left( x\right) ,h_{2}^{\prime }\left( x\right) >0$ for $x>0
		$. Then $\left( \mathcal{R}/\mathcal{B}\right) ^{\prime },\left( \mathcal{R}/%
		\mathcal{B}\right) ^{\prime \prime }<0$ for $x>0$. Clearly, $%
		\lim_{x\rightarrow 1}{\mathcal{R}(x)}/{\mathcal{B}(x)}=0$, and an easy
		computation gives 
		\begin{equation*}
			\lim_{x\rightarrow 0^{+}}\frac{\mathcal{R}(x)}{\mathcal{B}(x)}%
			=\lim_{x\rightarrow 0^{+}}\frac{x\left( -\psi (x)-\gamma \right) }{\Gamma
				(x+1)^{2}/\Gamma (2x+1)}=\lim_{x\rightarrow 0^{+}}x\left( -\psi (x)-\gamma
			\right) =1
		\end{equation*}%
		and 
		\begin{align*}
			\lim_{x\rightarrow \infty }\frac{\mathcal{R}(x)}{\mathcal{B}(x)}&
			=\lim_{x\rightarrow \infty }\frac{\Gamma (2x)}{\Gamma (x)^{2}}\left( -2\psi
			(x)-2\gamma \right) =\lim_{x\rightarrow \infty }\frac{\sqrt{2\pi }%
				e^{-2x}(2x)^{2x-1/2}\left( -2\psi (x)-2\gamma \right) }{2\pi e^{-2x}x^{2x-1}}
			\\
			& =\lim_{x\rightarrow \infty }\frac{2^{2x-1/2}\sqrt{x}\left( -2\psi
				(x)-2\gamma \right) }{\sqrt{2\pi }}=-\infty ,
		\end{align*}%
		thereby completing the proof.
	\end{proof}
	
	Lastly, we prove Theorem \ref{T-B-R/xx-pd}.
	
	\begin{proof}[Proof of Theorem \protect\ref{T-B-R/xx-pd}]
		Let%
		\begin{equation*}
			f\left( x\right) =1-\left( -2\psi \left( x\right) -2\gamma \right) \frac{%
				\Gamma \left( 2x\right) }{\Gamma \left( x\right) ^{2}}\text{ \ and \ }%
			g\left( x\right) =x^{2}\frac{\Gamma \left( 2x\right) }{\Gamma \left(
				x\right) ^{2}}.
		\end{equation*}%
		Then $D\left( x\right) =f\left( x\right) /g\left( x\right) $ with $f\left(
		0^{+}\right) =g\left( 0^{+}\right) =0$. Differentiation yields 
		\begin{align*}
			f^{\prime }\left( x\right) =& 2\frac{\Gamma \left( 2x\right) }{\Gamma \left(
				x\right) ^{2}}\left[ 2\left( \psi \left( x\right) +\gamma \right) \left(
			\psi \left( 2x\right) -\psi \left( x\right) \right) +\psi ^{\prime }\left(
			x\right) \right] =2\frac{\Gamma \left( 2x\right) }{\Gamma \left( x\right)
				^{2}}h_{2}\left( x\right) , \\
			g^{\prime }\left( x\right) =& 2\frac{\Gamma \left( 2x\right) }{\Gamma \left(
				x\right) ^{2}}x\left[ x\left( \psi \left( 2x\right) -\psi \left( x\right)
			\right) +1\right] =2\frac{\Gamma \left( 2x\right) }{\Gamma \left( x\right)
				^{2}}xh_{1}(x),
		\end{align*}%
		where $h_{1}\left( x\right) $ and $h_{2}\left( x\right) $ are defined by (%
		\ref{h1}) and (\ref{h2}), respectively. Then 
		\begin{equation*}
			\frac{f^{\prime }\left( x\right) }{g^{\prime }\left( x\right) }=\frac{1}{%
				h_{1}\left( x\right) }\times \frac{h_{2}\left( x\right) }{x}.
		\end{equation*}%
		It has been proved in Lemma \ref{L-h1-pd} that $h_{1}\left( x\right) $ is
		positive and increasing on $\left( 0,\infty \right) $. Hence, $1/h_{1}\left(
		x\right) $ is positive and decreasing on $\left( 0,\infty \right) $. Since $%
		h_{2}\left( 0^{+}\right) =0$ and $h_{2}^{\prime \prime }\left( x\right) <0$
		for $x>0$ by Lemma \ref{L-h2-pd}, it follows from Lemma \ref{L-LMR} together
		with the limiting value $\lim_{x\rightarrow \infty }h_{2}(x)/x=0$ that the
		function $h_{2}\left( x\right) /x$ is also positive and decreasing on $%
		\left( 0,\infty \right) $. Then $f^{\prime }\left( x\right) /g^{\prime
		}\left( x\right) $ is decreasing on $\left( 0,\infty \right) $. Applying
		Lemma \ref{L-LMR} again, the decreasing property of $D\left( x\right) $ on $%
		\left( 0,\infty \right) $ follows immediately. This completes the proof.
	\end{proof}
	
	\section{An affirmative answer to Conjecture \protect\ref{C-Qiu}}\label{sec-5}
	
	In this section, we give an answer to Conjecture \ref{C-Qiu} made by Qiu, Ma
	and Huang in \cite{Qiu-CA-51-2020}.
	
	\begin{theorem}
		Let $R\left( x\right) =R\left( x,1-x\right) $ and $B\left( x\right) =B\left(
		x,1-x\right) $. The function 
		\begin{equation*}
			x\mapsto F\left( x\right) =R\left( x\right) -\frac{B\left( x\right) }{%
				1+x\left( 1-x\right) }
		\end{equation*}%
		is completely monotonic on $\left( 0,1/2\right) $.
	\end{theorem}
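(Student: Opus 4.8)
The plan is to exploit the symmetry of $F$ about $x=1/2$ and reduce complete monotonicity to the nonnegativity of a single sequence of Taylor coefficients. Since $R(x)=R(x,1-x)$, $B(x)=\pi/\sin(\pi x)$ and $1+x(1-x)$ are all invariant under $x\mapsto 1-x$, the function $F$ is even about $x=1/2$; its nearest singularities, coming from the poles of $\psi$ and $\csc$ at $x=0$ and $x=1$, lie at distance $1/2$, so the Taylor series of $F$ at $x=1/2$ has radius of convergence exactly $1/2$ and converges on all of $(0,1/2)$. Writing $s=\tfrac12-x$ and $\Phi(s)=F(\tfrac12-s)=\sum_{m\ge0}c_ms^{2m}$, one has $(-1)^nF^{(n)}(x)=\Phi^{(n)}(s)$, and termwise differentiation shows $\Phi^{(n)}(s)\ge0$ for $s\in(0,1/2)$ whenever all $c_m\ge0$. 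Thus $F$ is completely monotonic on $(0,1/2)$ as soon as $c_m\ge0$ for every $m$, and the whole problem collapses to this.

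First I would compute the three constituent series in $s$. Taylor expansion at $1/2$ together with $\psi^{(2m)}(1/2)=-(2m)!\,(2^{2m+1}-1)\zeta(2m+1)$ (from \eqref{psi-sr} and \eqref{ln}) gives
\[
R(\tfrac12-s)=4\ln2+2\sum_{m\ge1}(2^{2m+1}-1)\zeta(2m+1)\,s^{2m}=:\sum_{m\ge0}r_ms^{2m},
\]
while the reflection formula yields $B(\tfrac12-s)=\pi\sec(\pi s)=\sum_{i\ge0}\frac{|E_{2i}|\pi^{2i+1}}{(2i)!}s^{2i}=:\sum_{i\ge0}b_is^{2i}$, and $1+x(1-x)=\tfrac54-s^2$ gives $\frac{1}{1+x(1-x)}=\frac45\sum_{j\ge0}(4/5)^js^{2j}$. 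Taking the Cauchy product of the last two series and subtracting from the first, the target coefficients are
\[
c_m=r_m-\sum_{i=0}^{m}b_i\,\frac{4^{\,m-i+1}}{5^{\,m-i+1}}.
\]

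The decisive step is to recognize the closed forms $r_m=4^{m+1}\lambda(2m+1)$ for $m\ge1$ (via \eqref{ln}) and $b_i=4^{i+1}\beta(2i+1)$ (via \eqref{b2n+1}). Since $\beta(2i+1)=1-3^{-(2i+1)}+\cdots<1$ and the weights satisfy $\tfrac45\sum_{l\ge0}5^{-l}=1$, the convolution sum is strictly dominated:
\[
\sum_{i=0}^{m}b_i\,\frac{4^{\,m-i+1}}{5^{\,m-i+1}}=4^{m+1}\cdot\frac45\sum_{l=0}^{m}\frac{\beta(2(m-l)+1)}{5^{l}}<4^{m+1}\cdot\frac45\sum_{l=0}^{\infty}5^{-l}=4^{m+1},
\]
whereas $r_m=4^{m+1}\lambda(2m+1)>4^{m+1}$ because $\lambda(2m+1)>1$; hence $c_m>0$ for $m\ge1$. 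The base case is separate: $c_0=4\ln2-\tfrac{4\pi}{5}>0$, equivalent to $2^5=32>e^{\pi}$. (If one prefers to avoid $\lambda,\beta$, the upper bound in \eqref{E2n<} gives $b_i<4^{i+1}$, and the elementary inequality $\zeta(2m+1)>(1-2^{-(2m+1)})^{-1}$—the zeta sum dominating the geometric sum over powers of $2$—gives $r_m\ge4^{m+1}$, yielding the same conclusion.)

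I expect the main obstacle, and the reason this is a genuine conjecture rather than a one-line consequence, to be that the factor $1/(1+x(1-x))$ cannot be peeled off for free: the numerator $G(x)=[1+x(1-x)]R(x)-B(x)$ is \emph{not} completely monotonic on $(0,1/2)$, since its fourth Taylor coefficient at $1/2$, namely $\tfrac54r_2-r_1-b_2$, is already negative. Hence one cannot argue $F=G/(1+x(1-x))$ by multiplying complete monotonicities. The division genuinely redistributes the coefficients, and the subtlety is that $r_m$ and the convolution sum are asymptotically equal (both $\sim4^{m+1}$, as $\lambda(2m+1),\beta(2i+1)\to1$), so the positivity of $c_m$ is tight; it rests precisely on the calibration $\beta(\mathrm{odd})<1<\lambda(\mathrm{odd})$ against geometric weights summing to exactly $1$. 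Establishing the two closed forms for $r_m$ and $b_i$ cleanly, and verifying the normalization $\tfrac45\sum_{l\ge0}5^{-l}=1$ that matches the constant $1$ in $1+x(1-x)$, is where the real content lies.
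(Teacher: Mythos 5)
Your proof is correct and follows essentially the same route as the paper's: expand $R$, $B$ and $1/(1+x(1-x))$ in powers of $t=1/2-x$, reduce complete monotonicity to nonnegativity of the even Taylor coefficients, and prove positivity by playing $\lambda(2m+1)>1$ against $\beta(2i+1)<1$ and the geometric weights summing to $1$ --- the paper's bounds $v_k<4^{k+1}$ (from (\ref{E2n<})) and $(1-2^{-x})\zeta(x)>1$ are precisely these two inequalities in disguise. The only cosmetic difference is that the paper quotes the expansions of $R$ and $B$ at $x=1/2$ from Qiu--Ma--Huang instead of rederiving them.
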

	
	\begin{proof}
		It was proved in \cite[Lemma 2.2]{Qiu-CA-51-2020} that 
		\begin{align*}
			R\left( x\right)&=\ln 16+4\sum_{n=1}^{\infty }2^{2n}\lambda \left(
			2n+1\right) \left( \frac{1}{2}-x\right) ^{2n}=\sum_{n=0}^{\infty
			}u_{n}t^{2n}, \\
			B\left( x\right)&=4\sum_{n=0}^{\infty }2^{2n}\beta \left( 2n+1\right) \left( 
			\frac{1}{2}-x\right) ^{2n}=\sum_{n=0}^{\infty }v_{n}t^{2n},
		\end{align*}%
		with $t=1/2-x$, $u_{0}=\ln 16$ and 
		\begin{align*}
			u_{n}&=2^{2n+2}\lambda \left( 2n+1\right) \text{ \ for }n\geq 1\text{,} \\
			v_{n}&=2^{2n+2}\beta \left( 2n+1\right) \text{ \ for }n\geq 0.
		\end{align*}%
		Here $\lambda \left( 2n+1\right) $ and $\beta \left( 2n+1\right) $ are given
		by (\ref{ln}) and (\ref{b2n+1}), respectively. Using the formulas given in (%
		\ref{ln}) and (\ref{b2n+1}), $u_{n}$ and $v_{n}$ can be written as 
		\begin{align*}
			u_{n}&=2\left( 2^{2n+1}-1\right) \zeta \left( 2n+1\right) \text{ \ for }%
			n\geq 1\text{,} \\
			v_{n}&=\frac{\pi ^{2n+1}}{\left( 2n\right) !}\left\vert E_{2n}\right\vert 
			\text{ \ for }n\geq 0\text{.}
		\end{align*}
		
		Noting that $1+x\left( 1-x\right) =5/4-t^{2}$ and using the Cauchy product
		formula, we get 
		\begin{equation*}
			\frac{B\left( x\right) }{1+x\left( 1-x\right) }=\frac{4}{5}\frac{%
				\sum_{n=0}^{\infty }v_{n}t^{2n}}{1-4t^{2}/5}=\sum_{n=0}^{\infty }\left( 
			\frac{4}{5}\sum_{k=0}^{n}\left( \frac{4}{5}\right) ^{n-k}v_{k}\right) t^{2n},
		\end{equation*}%
		and thereby 
		\begin{equation*}
			F\left( x\right) =R\left( x\right) -\frac{B\left( x\right) }{1+x\left(
				1-x\right) }=\sum_{n=0}^{\infty }s_{n}t^{2n},
		\end{equation*}%
		where 
		\begin{equation*}
			s_{n}=u_{n}-\left( \frac{4}{5}\right) ^{n+1}\sum_{k=0}^{n}\left( \frac{5}{4}%
			\right) ^{k}v_{k}.
		\end{equation*}%
		In what follows, it is enough to show that $s_{n}\geq 0$ for all $n\in 
		\mathbb{N}_{0}$, since $t^{2n}=\left( 1/2-x\right) ^{2n}$ is completely
		monotonic in $x$ on $\left( 0,1/2\right) $. In fact, on one hand, a simple
		verification gives 
		\begin{equation*}
			s_{0}=u_{0}-\frac{4}{5}v_{0}=\ln{16}-\frac{4}{5}\pi E_{0}=\ln 16-\frac{4}{5}%
			\pi =0.259...>0,
		\end{equation*}%
		and on the other hand, applying the second inequality of (\ref{E2n<}) to $%
		v_{k}$, we obtain 
		\begin{equation*}
			v_{k}=\frac{\pi ^{2k+1}}{\left( 2k\right) !}\left\vert E_{2k}\right\vert
			<4^{k+1}\text{\ for \ }k\in \mathbb{N}_{0},
		\end{equation*}%
		and hence, for $n\geq 1$, 
		\begin{align*}
			s_{n}& =u_{n}-\left( \frac{4}{5}\right) ^{n+1}\sum_{k=0}^{n}\left( \frac{5}{4%
			}\right) ^{k}v_{k}>u_{n}-\left( \frac{4}{5}\right)
			^{n+1}\sum_{k=0}^{n}\left( \frac{5}{4}\right) ^{k}4^{k+1} \\
			& =2^{2n+2}\left( 1-2^{-2n-1}\right) \zeta \left( 2n+1\right)
			-2^{2n+2}\left( 1-5^{-n-1}\right) \\
			& >2^{2n+2}-2^{2n+2}\left( 1-5^{-n-1}\right) =2^{2n+2}5^{-n-1}>0.
		\end{align*}%
		The last second inequality follows from the known functional inequality $%
		\left( 1-2^{-x}\right) \zeta \left( x\right) >1$ for $x>1$. This completes
		the proof.
	\end{proof}
	
	\section{Concluding remarks}
	
	In this paper, we presented several expansions and higher order monotonicity
	properties involving the functions%
	\begin{equation*}
		\mathcal{B}\left( x\right) =\frac{\Gamma \left( x\right) ^{2}}{\Gamma \left(
			2x\right) }\text{ \ and \ }\mathcal{R}\left( x\right) =-2\psi \left(
		x\right) -2\gamma ;
	\end{equation*}%
	we also found that the function $x\mapsto \mathcal{R}\left( x\right) /%
	\mathcal{B}\left( x\right) $ is decreasing and concave on $\left( 0,\infty
	\right) $, while $x\mapsto \left( \mathcal{B}\left( x\right) -\mathcal{R}%
	\left( x\right) \right) /x^{2}$ is decreasing on $\left( 0,\infty \right) $;
	Moreover, we gave an affirmative answer to Conjecture \ref{C-Qiu}.
	
	Finally, we present several remarks.
	
	\begin{remark}
		In Section \ref{sec-3}, we do not present asymptotic expansions of $\mathcal{B}\left(
		x\right) $. In fact, by (\ref{Bl}) and \cite[Theorem 1]{Chen-AMC-250-2015},
		we have%
		\begin{equation*}
			\ln \left( \frac{x}{\sqrt{\pi }}2^{2x-1}\mathcal{B}\left( x\right) \right)
			=\ln \frac{\Gamma \left( x+1\right) }{\Gamma \left( x+1/2\right) }\thicksim 
			\frac{1}{2}\ln x+\sum_{k=1}^{n}\frac{\left( 1-2^{-2k}\right) B_{2k}}{k\left(
				2k-1\right) }\frac{1}{x^{2k-1}}
		\end{equation*}%
		as $x\rightarrow \infty $, and the function%
		\begin{equation*}
			x\mapsto \left( -1\right) ^{n}\left[ \ln \mathcal{B}\left( x\right) -\frac{1%
			}{2}\ln \pi +\left( 2x-1\right) \ln 2+\frac{1}{2}\ln x-\sum_{k=1}^{n}\frac{%
				\left( 1-2^{-2k}\right) B_{2k}}{k\left( 2k-1\right) }\frac{1}{x^{2k-1}}%
			\right] 
		\end{equation*}%
		is completely monotonic on $\left( 0,\infty \right) $. Using \cite[Corollary
		5]{Yang-PAMS-148-2020}, we have%
		\begin{equation*}
			\ln \left( \frac{x}{\sqrt{\pi }}2^{2x-1}\mathcal{B}\left( x\right) \right)
			\thicksim \frac{1}{2}\ln \left( x+\frac{1}{4}\right) +\sum_{k=1}^{n}\frac{%
				B_{2k+1}\left( 1/4\right) }{k\left( 2k+1\right) \left( x+1/4\right) ^{2k}}
		\end{equation*}%
		as $x\rightarrow \infty $, and the function%
		\begin{eqnarray*}
			x &\mapsto &\left( -1\right) ^{n}\left[
			\begin{array}{l}
			 \ln \mathcal{B}\left( x\right) -%
			\dfrac{1}{2}\ln \pi +\left( 2x-1\right) \ln 2+\ln x \\
			\quad -\dfrac{1}{2}\ln \left( x+\dfrac{1}{4}\right) -\dsum\limits_{k=1}^{n}\dfrac{
				B_{2k+1}\left( 1/4\right) }{k\left( 2k+1\right) \left( x+1/4\right) ^{2k}}
			\end{array}
			\right] 
		\end{eqnarray*}%
		is completely monotonic on $\left( 0,\infty \right) $.
	\end{remark}
	
	\begin{remark}
		The reciprocal of $\mathcal{B}\left( x\right) $ has a hypergeometric series
		representation in \cite[Remark 6]{Tian-JMAA-493-2021} that 
		\begin{equation*}
			\frac{1}{\mathcal{B}\left( x\right) }=\dfrac{\Gamma \left( 2x\right) }{%
				\Gamma \left( x\right) ^{2}}=\sum_{k=1}^{\infty }\frac{\left( -x\right)
				_{k}^{2}}{k!\left( k-1\right) !}.
		\end{equation*}
	\end{remark}
	
	\begin{remark}
		From those higher order monotonicity results given in this paper, we can
		deduce some new inequalities for $\mathcal{B}\left( x\right) $ and $\mathcal{%
			R}\left( x\right) $. For instance, 
		\begin{enumerate}[leftmargin=2.2em,label=(\roman*)]
			\item by Theorem \ref{T-RlnxB-sc} (iii), the double inequality 
		\begin{align*}
			&\frac{2}{x}\exp \left[ \sum_{k=2}^{2n}\left( -1\right) ^{k-1}\left(
			2^{k}-2\right) k^{-1}\zeta \left( k\right) x^{k}\right] \\
			&\hspace{1cm}<\mathcal{B}\left( x\right) <\frac{2}{x}\exp \left[ \sum_{k=2}^{2n+1}%
			\left( -1\right) ^{k-1}\left( 2^{k}-2\right) k^{-1}\zeta \left( k\right)
			x^{k}\right]
		\end{align*}%
		holds for $x>0$ and $n\in \mathbb{N}$;
		\item by the decreasing property of $\mathcal{B}r_{1}\left( x\right) $
		defined by (\ref{Br1}), we have%
		\begin{equation*}
			2^{1-2x}\sum_{k=0}^{n}\frac{W_{k}}{x+k}<\mathcal{B}\left( x\right)
			<2^{1-2x}\sum_{k=0}^{n}\frac{W_{k}}{x+k}+\lambda _{n}2^{1-2x}
		\end{equation*}%
		for $x>0$ and $n\in \mathbb{N}$, where $\lambda _{n}=\sum_{k=n+1}^{\infty
		}\left( W_{k}/k\right) $;
		\item  by the complete monotonicity of $\mathcal{R}r_{n}\left( x\right) $
		defined by (\ref{Rrn}), the double inequality%
		\begin{equation*}
			\frac{2}{x}+2\sum_{k=1}^{2n-1}\left( -1\right) ^{k}\zeta \left( k+1\right)
			x^{k}<\mathcal{R}\left( x\right) <\frac{2}{x}+2\sum_{k=1}^{2n}\left(
			-1\right) ^{k}\zeta \left( k+1\right) x^{k}
		\end{equation*}%
		for $x>0$ and $n\in \mathbb{N}$;
		\item using the decreasing and concave property of $x\mapsto \mathcal{R}(x)/%
		\mathcal{B}(x)$ on $\left( 0,1\right) $ (given in Theorem \ref{T-B/R-dcc}),
		we have%
		\begin{equation*}
			1-x<\frac{\mathcal{R}(x)}{\mathcal{B}(x)}<1\text{ \ for }x\in \left(
			0,1\right) \text{;}
		\end{equation*}
		\item  by an application of the decreasing property of $x\mapsto \left( \mathcal{B}%
		\left( x\right) -\mathcal{R}\left( x\right) \right) /x^{2}$ shown in
		Theorems \ref{T-B-R/xx-pd}, we have%
		\begin{equation*}
			4\left( \pi -4\ln 2\right) x^{2}<\mathcal{B}\left( x\right) -\mathcal{R}%
			\left( x\right) <2\zeta \left( 3\right) x^{2}
		\end{equation*}%
		for $x\in \left( 0,1/2\right) $.
			\end{enumerate}
	\end{remark}
	
	\begin{remark}
		Some power and hypergeometric series representations given in Section 3
		imply certain identities. For example, putting $x\rightarrow 1/2$ in (\ref%
		{lnB-ps}) gives%
		\begin{equation*}
			\sum_{n=2}^{\infty }\left( -1\right) ^{n}\frac{\left( 1-2^{1-n}\right) \zeta
				\left( n\right) }{n}=\ln \frac{4}{\pi };
		\end{equation*}%
		letting $x\rightarrow 1/2$ in (\ref{R-ps}) gives%
		\begin{equation*}
			\sum_{n=1}^{\infty }\left( -1\right) ^{n-1}2^{-n}\zeta \left( n+1\right)
			=2-\ln 4.
		\end{equation*}
	\end{remark}
	
	\begin{remark}
		An interesting infinite series appeared in Theorem \ref{T-B-hs}, that is,%
		\begin{equation}
			\mathcal{H}\left( s\right) :=\sum_{n=1}^{\infty }\frac{W_{n}}{n^{s}}.
			\label{H}
		\end{equation}%
		By the inequalities (\ref{Wn<>}), the series $\mathcal{H}\left( s\right) $
		is convergent for $s>1/2$. From the proof of Theorem \ref{T-B-hs} or Remark %
		\ref{Remark2}, we see that%
		\begin{equation*}
			\frac{\sqrt{\pi }\Gamma \left( x\right) }{\Gamma \left( x+1/2\right) }-\frac{%
				1}{x}=\sum_{n=1}^{\infty }\frac{W_{n}}{x+n},
		\end{equation*}%
		which, by letting $x\rightarrow 0^+$, gives%
		\begin{equation*}
			\sum_{n=1}^{\infty }\frac{W_{n}}{n}=\ln 4.
		\end{equation*}%
		Similarly,%
		\begin{equation*}
			\sum_{n=1}^{\infty }\frac{W_{n}}{n^{2}}=-\lim_{x\rightarrow 0^+}\frac{d}{dx}%
			\left( \frac{\sqrt{\pi }\Gamma \left( x\right) }{\Gamma \left( x+1/2\right) }%
			-\frac{1}{x}\right) =\frac{\pi ^{2}}{6}-2(\ln 2)^2.
		\end{equation*}%
		In general, for $k\in \mathbb{N}_{0}$, we have%
		\begin{equation*}
			\sum_{n=1}^{\infty }\frac{W_{n}}{n^{k+1}}=\frac{\left( -1\right) ^{k}}{k!}%
			\lim_{x\rightarrow 0^+}\frac{d^{k}}{dx^{k}}\left( \frac{\sqrt{\pi }\Gamma
				\left( x\right) }{\Gamma \left( x+1/2\right) }-\frac{1}{x}\right) .
		\end{equation*}
	\end{remark}
	
	Further, we propose the following problem.
	
	\begin{problem}
		Discuss the properties of $\mathcal{H}\left( s\right) :=\sum_{n=1}^{\infty
		}\left( W_{n}/n^{s}\right) $ and compute $\mathcal{H}\left( s\right) $ for
		certain special $s>1/2$.
	\end{problem}

\end{document}